\numberwithin{equation}{section}
\numberwithin{figure}{section}
\theoremstyle{plain}
\newtheorem{thm}{Theorem}[section]
\newtheorem{lem}[thm]{Lemma}
\newtheorem{rem}[thm]{Remark}
  \newcounter{casectr}
\theoremstyle{definition}
\theoremstyle{remark}
\newcommand{\NNN}{\mathcal{N}}
\newcommand{\<}{\langle}
\renewcommand{\>}{\rangle}
\begin{document}
\title{On a large deviation principle for 1d cubic NLS with optimal decaying data}
\author{Chenjie Fan\footnote{State Key Laboratory of Mathematical Sciences, Academy of Mathematics and Systems Science, Chinese Academy of Sciences,
Beijing, China,
fancj@amss.ac.cn}, Feng Ye\footnote{School of Mathematical Sciences,
University of Chinese Academy of Sciences,
Beijing, China
yefeng22@mails.ucas.ac.cn}}
\maketitle

\begin{abstract}
In this article, we revisit the work of \cite{garrido2023large}, and prove large deviation principles for more general random initial data for cubic NLS. The Fourier coefficient of our random data admits an optimal polynomial decay.
\end{abstract}

\section{Introduction}
\subsection{Statement of results}
Consider cubic NLS on 1d torus $\mathbb{T}$,
\begin{equation}\label{eq: main}
\begin{cases}
iu_{t}+\Delta u=\epsilon^{2}|u|^{2}u,\\	
u(0,x)=u_{0}(x)
\end{cases}
\end{equation}
here $\epsilon>0$ is a parameter.

We will focus on the random initial data of type
\begin{equation}\label{eq: rmain}
u^{\omega}_{0}(x)=\sum_{n}c_{n}g_{n}^{\omega}e^{inx}
\end{equation}
where $c_{n}=\<n\>^{-(\frac{1}{2}+\theta)}, \theta>0$, and $g_{n}^{\omega}$ be i.i.d standard complex Gaussian.\\

Note that $u^{\omega}_{0}\in L_{x}^{2}$ almost surely, and it is well known that all $L^{2}$ initial data generates a global flow, \cite{bourgain1993fourier}. Our main result is 
\begin{thm}\label{thm: main}
Consider \eqref{eq: main} with random initial data \eqref{eq: rmain}. Let $c_{n}$ in \eqref{eq: rmain} be $\langle n\rangle^{-(\frac{1}{2}+\theta)},\theta>0$. Let $T_{\epsilon}\approx \epsilon^{-1}$. One has the following large deviation principle
\begin{equation}\label{eq: estimatemain}
\lim_{\epsilon\rightarrow 0^{+}}\epsilon\ln \mathbb{P}(\|u^{\omega}\|_{L^{\infty}_{t,x}([0,T_{\epsilon}]\times \mathbb{T})}>z_{0}\epsilon^{-\frac{1}{2}})=-\frac{z_{0}^{2}}{\sum_{n}|c_{n}|^{2}}.
\end{equation}	
\end{thm}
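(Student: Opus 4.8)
\emph{Strategy and the lower bound.} Write $\sigma^{2}:=\sum_{n}|c_{n}|^{2}<\infty$ (finite precisely because $\theta>0$), let $v:=e^{it\Delta}u_{0}^{\omega}$ be the free evolution, let $A_{\epsilon}:=\{\|u^{\omega}\|_{L^{\infty}_{t,x}([0,T_{\epsilon}]\times\mathbb{T})}>z_{0}\epsilon^{-1/2}\}$, and recall that $\|u^{\omega}(t)\|_{L^{2}}=\|u_{0}^{\omega}\|_{L^{2}}$ is conserved along \eqref{eq: main}. The lower bound is immediate: $\|u^{\omega}\|_{L^{\infty}_{t,x}}\ge|u^{\omega}(0,0)|=|u_{0}^{\omega}(0)|$, and $u_{0}^{\omega}(0)=\sum_{n}c_{n}g_{n}^{\omega}$ is a centered complex Gaussian of variance $\sigma^{2}$, so $\mathbb{P}(|u_{0}^{\omega}(0)|>z_{0}\epsilon^{-1/2})=e^{-z_{0}^{2}/(\sigma^{2}\epsilon)}$, whence $\liminf_{\epsilon\to0}\epsilon\ln\mathbb{P}(A_{\epsilon})\ge-z_{0}^{2}/\sigma^{2}$. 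For the upper bound fix $\delta\in(0,z_{0})$ and use $A_{\epsilon}\subseteq B_{\epsilon}\cup C_{\epsilon}$ with $B_{\epsilon}:=\{\|v\|_{L^{\infty}_{t,x}([0,T_{\epsilon}]\times\mathbb{T})}>(z_{0}-\delta)\epsilon^{-1/2}\}$ and $C_{\epsilon}:=A_{\epsilon}\cap B_{\epsilon}^{c}$; I will show $\limsup\epsilon\ln\mathbb{P}(B_{\epsilon})\le-(z_{0}-\delta)^{2}/\sigma^{2}$ and $\epsilon\ln\mathbb{P}(C_{\epsilon})\to-\infty$, then let $\delta\downarrow0$.

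\emph{The linear event.} For every $(t,x)$ the Gaussian field $v(t,x)=\sum_{n}c_{n}g_{n}^{\omega}e^{i(nx-n^{2}t)}$ has variance exactly $\sigma^{2}$, so the Borell--TIS inequality gives $\mathbb{P}(B_{\epsilon})\le\exp\!\big(-\big((z_{0}-\delta)\epsilon^{-1/2}-M_{\epsilon}\big)^{2}/\sigma^{2}\big)$ with $M_{\epsilon}:=\mathbb{E}\|v\|_{L^{\infty}_{t,x}([0,T_{\epsilon}]\times\mathbb{T})}$. The crucial point is $M_{\epsilon}=o(\epsilon^{-1/2})$: a Dudley chaining argument, using that $v(t,\cdot)$ is equidistributed with $u_{0}^{\omega}$ for each fixed $t$ (so $\|v(t,\cdot)\|_{L^{\infty}_{x}}$ has $O(1)$ mean and $\sigma$-subgaussian tails) and that the space--time modulus of continuity of $v$ at unit scale has variance $O(1)$, yields $M_{\epsilon}\lesssim(\log T_{\epsilon})^{1/2}\approx(\log\epsilon^{-1})^{1/2}$. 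This is delicate exactly because the data is borderline $L^{2}$: $u_{0}^{\omega}\notin\mathcal{F}L^{1}$ almost surely, so the crude bound $\|v\|_{L^{\infty}_{x}}\le\|u_{0}^{\omega}\|_{\mathcal{F}L^{1}}$ is vacuous and cancellation must be used, which is where $\theta>0$ again enters. Consequently $\limsup_{\epsilon\to0}\epsilon\ln\mathbb{P}(B_{\epsilon})\le-(z_{0}-\delta)^{2}/\sigma^{2}$.

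\emph{The nonlinear event.} First I would gauge away the resonant part of the cubic interaction: write $u^{\omega}=e^{i\Phi(t)}\tilde u$ with $\Phi(t)=-2\epsilon^{2}\|u_{0}^{\omega}\|_{L^{2}}^{2}t$ (legitimate by $L^{2}$ conservation). Since $|e^{i\Phi}|=1$ this preserves $|u^{\omega}|=|\tilde u|$, and $\tilde u$ solves $i\tilde u_{t}+\Delta\tilde u=\epsilon^{2}\mathcal{N}_{\mathrm{nr}}(\tilde u)$ with $\mathcal{N}_{\mathrm{nr}}$ the non-resonant trilinear term, for which the output modulation obeys $\langle\tau+|\xi|^{2}\rangle\gtrsim\langle(n_{1}-n)(n_{3}-n)\rangle\ge2$. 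Decompose $\tilde u=v+R$, $R(0)=0$. On $B_{\epsilon}^{c}$ one has $\|u_{0}^{\omega}\|_{L^{2}}\lesssim(z_{0}-\delta)\epsilon^{-1/2}$ and hence $\epsilon^{2}\|v\|_{L^{\infty}_{t,x}}^{2}T_{\epsilon}\lesssim(z_{0}-\delta)^{2}$, and I claim that the trilinear $X^{s,b}$ estimates on $\mathbb{T}$ with $s\in(\tfrac12,\tfrac12+\theta)$, $b=\tfrac12+$ — exploiting the modulation weight $\langle\tau+|\xi|^{2}\rangle^{-1/2+}$ on the non-resonant set (which is what lets one reach $s>\tfrac12$, hence the use of $\theta>0$), the global-in-time Duhamel estimate, and a Gronwall/continuity argument on $[0,T_{\epsilon}]$ in which the $R$-dependent terms contribute only a bounded multiplicative factor $\exp(O(\epsilon^{2}\|v\|_{L^{\infty}_{t,x}}^{2}T_{\epsilon}))=O_{z_{0}}(1)$ while the rest carries the prefactor $\epsilon^{2}$ — give
\[
\|R\|_{X^{s,1/2+}([0,T_{\epsilon}])}\ \lesssim_{z_{0}}\ \epsilon^{2}\,\|\mathcal{N}_{\mathrm{nr}}(v)\|_{X^{s,-1/2+}([0,T_{\epsilon}])}\qquad\text{on }B_{\epsilon}^{c}.
\]
As $\|R\|_{L^{\infty}_{t,x}}\lesssim\|R\|_{X^{s,1/2+}}$ and on $C_{\epsilon}$ we have $\|R\|_{L^{\infty}_{t,x}}\ge\|u^{\omega}\|_{L^{\infty}_{t,x}}-\|v\|_{L^{\infty}_{t,x}}>\delta\epsilon^{-1/2}$, this forces $\|\mathcal{N}_{\mathrm{nr}}(v)\|_{X^{s,-1/2+}([0,T_{\epsilon}])}>c_{z_{0}}\delta\,\epsilon^{-5/2}$ on $C_{\epsilon}$.

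\emph{The chaos tail and conclusion.} Now $\mathcal{N}_{\mathrm{nr}}(v)$ is a homogeneous Gaussian chaos of degree $3$ in $\{g_{n}^{\omega}\}$, and a direct covariance computation (convergent exactly because $s<\tfrac12+\theta$, using the weight $\langle(n_{1}-n)(n_{3}-n)\rangle^{-1+}$) gives $\mathbb{E}\|\mathcal{N}_{\mathrm{nr}}(v)\|_{X^{s,-1/2+}([0,T_{\epsilon}])}^{2}\lesssim T_{\epsilon}\approx\epsilon^{-1}$: the key feature is that this is the \emph{global} Bourgain norm over $[0,T_{\epsilon}]$, which grows only like $\sqrt{T_{\epsilon}}$, not like $T_{\epsilon}$. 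Hence by hypercontractivity $\mathbb{P}(C_{\epsilon})\le\mathbb{P}\big(\|\mathcal{N}_{\mathrm{nr}}(v)\|_{X^{s,-1/2+}([0,T_{\epsilon}])}>c_{z_{0}}\delta\epsilon^{-5/2}\big)\le C\exp\!\big(-c(\delta\epsilon^{-5/2}/\epsilon^{-1/2})^{2/3}\big)=C\exp(-c'\delta^{2/3}\epsilon^{-4/3})$, so $\epsilon\ln\mathbb{P}(C_{\epsilon})\to-\infty$. Combining, $\limsup_{\epsilon\to0}\epsilon\ln\mathbb{P}(A_{\epsilon})\le-(z_{0}-\delta)^{2}/\sigma^{2}$, and letting $\delta\downarrow0$ together with the lower bound proves \eqref{eq: estimatemain}. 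The main obstacle is the nonlinear step: one must (i) push the $X^{s,b}$ theory for the gauged equation to regularity $s>\tfrac12$ so that $L^{\infty}_{t,x}$ is controlled — possible only for $\theta>0$, which is precisely where the optimal decay is used — and (ii) close the iteration on $[0,T_{\epsilon}]$ with $T_{\epsilon}\approx\epsilon^{-1}$ so that the unique surviving probabilistic object $\|\mathcal{N}_{\mathrm{nr}}(v)\|_{X^{s,-1/2+}([0,T_{\epsilon}])}$ enters at threshold $\epsilon^{-5/2}$ (giving a super-exponentially small probability) rather than $\epsilon^{-2}$ (which would only give rate $-c\delta^{2/3}$ and would destroy the sharp constant). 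The estimate $M_{\epsilon}=o(\epsilon^{-1/2})$ for borderline-$L^{2}$ random data is a secondary but also delicate point.
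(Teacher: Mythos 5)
Your lower bound and your treatment of the linear event are fine (the $t=0$ pointwise Gaussian gives the lower bound even more directly than the paper, and Borell--TIS plus Dudley chaining is a legitimate alternative to the paper's discretize-and-use-derivative-bounds argument for the sup of the linear flow). The spirit of your nonlinear step --- gauge transform, isolate a non-resonant cubic chaos whose time-integrated size grows only like $T_\epsilon^{1/2}$, and kill it by hypercontractivity at a super-exponential scale --- is also the same mechanism the paper uses. But there is a genuine gap at the very first step of your nonlinear argument: gauging by the spatially uniform phase $\Phi(t)=-2\epsilon^{2}\|u_{0}\|_{L^{2}}^{2}t$ removes only the mass term, not the diagonal resonant part. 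By the identity $|f|^{2}f-\tfrac{1}{\pi}(\int|f|^{2})f=\mathcal{N}_{1}(f)-\mathcal{N}_{2}(f)$, the gauged equation still contains the fully resonant term $\mathcal{N}_{2}(\tilde u)=\sum_{n}|\hat{\tilde u}(n)|^{2}\hat{\tilde u}(n)e^{inx}$, for which $\Omega=0$, so your claim that the output modulation satisfies $\langle\tau+|\xi|^{2}\rangle\gtrsim\langle(n_{1}-n)(n_{3}-n)\rangle\ge2$ is false on the diagonal. This term cannot be dumped into $R$: its Duhamel contribution is secular, of size $\epsilon^{2}t\,|c_{n}g_{n}|^{2}c_{n}g_{n}$ at frequency $n$, and precisely on the events that drive the LDP (a single $|g_{n}|\sim\epsilon^{-1/2}$, probability $e^{-O(\epsilon^{-1})}$) it is of size comparable to $\epsilon^{-1/2}$ at $t\sim T_\epsilon$. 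Hence your key claim $\mathbb{P}(C_{\epsilon})\le Ce^{-c\delta^{2/3}\epsilon^{-4/3}}$ cannot hold with the free flow $v$ as comparison object: deviations of $u$ from $v$ of size $\epsilon^{-1/2}$ occur with probability only exponentially small at rate $\epsilon^{-1}$ with a $\delta$-dependent (small) constant, which destroys the sharp constant in \eqref{eq: estimatemain}. This is exactly why the paper compares $u$ not with $e^{it\Delta}u_{0}$ but with the modified flow $u_{app}$ of \eqref{eq: uapp}, which carries the per-mode phases $e^{it\epsilon^{2}c_{n}^{2}|g_{n}|^{2}}$ generated by $\mathcal{N}_{2}$; the LDP rate is unchanged because $g_{n}e^{it\epsilon^{2}c_{n}^{2}|g_{n}|^{2}}$ are still i.i.d.\ standard Gaussians.

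A second, related gap is your single global-in-time closure $\|R\|_{X^{s,1/2+}[0,T_{\epsilon}]}\lesssim_{z_{0}}\epsilon^{2}\|\mathcal{N}_{\mathrm{nr}}(v)\|_{X^{s,-1/2+}[0,T_{\epsilon}]}$ ``by Gronwall, with the $R$-dependent terms contributing a bounded factor.'' Note that $u_{0}\in H^{\sigma}$ a.s.\ only for $\sigma<\theta$, so for $s>\tfrac12$ the factors $v$ are not in $H^{s}$ and the deterministic periodic trilinear $X^{s,b}$ estimates do not apply to the mixed terms $\mathcal{N}(v,v,R)$, $\mathcal{N}(v,R,R)$; controlling them requires probabilistic multilinear estimates with the explicit random structure of $v$, and $X^{s,b}$ norms over an interval of length $\epsilon^{-1}$ do not interact with Gronwall in the way you assert. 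This is not a cosmetic point: it is the bulk of the work in the paper, which closes the argument by iterating unit-interval random-data local theory (Lemma \ref{lem: localrandom}) and running a normal-form bootstrap in $C_{t}H^{s}$ (Lemmas \ref{lem: bootstrap}, \ref{lem: normalform}, with the case analysis behind \eqref{eq: crucialreduce}) rather than by one global $X^{s,b}$ estimate. So while your architecture (split into a linear LDP plus a super-exponentially unlikely nonlinear deviation) matches the paper, the proposal as written both mishandles the resonant term and leaves the long-time nonlinear estimate unproved.
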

\begin{rem}
Upon finishing of this work, we notice a recent preprint of Liang-Wang, \cite{liang2025random},  which handles Theorem \ref{thm: main} in the case $\theta>\frac{1}{2}$. It should be noted that when $\theta>\frac{1}{2}$, the Fourier coefficient is in $l^{1}$, which forms an algebra. We handle all $\theta>0$.\\

Both \cite{garrido2023large}, \cite{liang2025random}, and this work actually work for time scale for $c\epsilon^{-1}|\ln \epsilon|$, for some $c$ small but universal. Such improvement comes from standard improvement in Gronwall argument. In \cite{liang2025random}, they state the time scale as $O(\epsilon^{-1}|\ln \epsilon|)$, however, it seems hard to cover time scale for $C\epsilon^{-1}|\ln \epsilon|$ for $C$ large, see the line below (3.30) in \cite{liang2025random}.
\end{rem}

\subsection{Background and motivation}
This work is inspired by the pioneering work \cite{garrido2023large}. Motivated by rogue wave phenomena, Garrido, Grande, Kurianski and Staffilani study \eqref{eq: main}, with random initial data of form \eqref{eq: rmain}, but for $c_{n}=ae^{-b|n|}$. They essentially
\footnote{They state the result for pointwise in $t\leq T_{\epsilon}\sim \epsilon^{-1}$. We take this chance to cover $\|u\|_{L_{t,x}^{\infty}}$ rather than estimate pointwise in $t$.} obtained \eqref{eq: estimatemain} for such types of random initial data. It has been remarked in \cite[Remark 1.3]{garrido2023large}, \textit{finding optimal family of coefficients (in terms of decay)}, rather than considering exponential decay $c_{n}$, remains an interesting open question.

By combining random data type techniques in \cite{bourgain1994periodic,bourgain1996invariant} and some ideas in \cite{compaan2021pointwise} (which also goes back to Bourgain's 1990s seminal work), we are able to extend the result in \cite{garrido2023large} to coefficients with polynomial decay $\langle n\rangle^{-\frac{1}{2}-\theta}$, $\theta>0$. Such constraint is optimal in the sense when $
\theta\leq 0$, even the initial data will leave $L_{x}^{\infty}$. It remains an interesting question to go beyond the time scale $T_{\epsilon}$. We note that this $\epsilon^{-1}$ time scale (neglecting logarithm corrections) is in some sense critical and we refer to \cite[Remark 1.7]{garrido2023large} for more details.
We note that both current work and \cite{garrido2023large},  can cover time scale of form $c\frac{|\ln \epsilon|}{\epsilon}$, where $c$ is some small universal number.\\

The overall proof scheme of current article follows \cite{garrido2023large}, which contains a large deviation principle for the (modified) linear flow and a smoothing estimate for the Duhamel part, (there is some subtle part here since one needs to first slightly perturb the free linear flow, we will discuss it later).\\

 Our main contributions in this work are
\begin{itemize}
\item We find a rather straightforward proof for the large deviation principle for the linear flow, which simplifies the arguments in \cite{garrido2023large}, and more importantly, extends to random initial data of form $\sum_{n}\frac{g^\omega _n}{\<n\>^{\frac{1}{2}+\theta}}e^{inx}$, $\forall \theta>0$. Our techniques for this part are crucial for our further analysis in the nonlinear smoothing part.
\item We adapt the $X^{s,b}$ analysis in \cite{bourgain1994periodic,bourgain1996invariant} in the setting of long time analysis of \cite{garrido2023large}, which gives the desired nonlinear smoothing\footnote{On one hand, we will use the $X^{s,b}$ analysis in the local random data theory as a black box;  on the other hand, the long time analysis we use to get nonlinear smoothing does explicitly use computations of $X^{s,b}$ type, but without going to the explicit form of $X^{s,b}$ analysis.}.
\end{itemize}

We end this session by pointing out ever since the seminal work \cite{bourgain1994periodic,bourgain1996invariant}, random initial data theory in nonlinear dispersive PDEs has become a very active research field. Many researchers contribute to this field and it is impossible to survey the field here. We simply mention that  in the perspective of  regularity in NLS model, the recent breakthrough \cite{deng2022random} gives a quite complete answer. We refer to \cite{deng2022random}, \cite{garrido2023large} and reference therein for more related reference.

\subsection{Notation}
Many terms $g_{n}^{\omega}, u^{\omega}, u_{0}^{\omega}$ are random, and thus depend on $\omega$. For simplicity, we will hide the implicit $\omega$ and simply denote them as $g_{n}, u,u_{0}$.\\

We introduce $\NNN_{1}, \NNN_{2}$ as 
\begin{equation}\label{eq: n1}
\NNN_{1}(f_{1},f_{2},f_{3})=\sum_{n_{2}\neq n_{1}, n_{3}}\hat{f}_{1}(n_{1})\overline{\hat{f}_{2}}(n_{2})\hat{f}_{3}(n_{3})e^{i(n_{1}-n_{2}+n_{3})x},
\end{equation}
\begin{equation}\label{eq: n_{2}}
\NNN_{2}(f_{1},f_{2},f_{3})=\sum_{n}\hat{f}_{1}(n)\overline{\hat{f}_{2}}(n)\hat{f}_{3}(n)e^{inx}	
\end{equation}
and one has 
\begin{equation}\label{eq: algebra}
|f|^{2}f-2\frac{1}{2\pi}(\int |f|^{2})f=\NNN_{1}(f,f,f)-\NNN_{2}(f,f,f)	
\end{equation}
We sometimes short $\NNN_{i}(f,f,f)$ as $\NNN_{i}(f)$.
\section*{Acknowledgement}
C. Fan was partially supported by the National Key R\&D Program of China, 2021YFA1000800, CAS Project for Young Scientists in Basic Research, Grant No.YSBR-031, and NSFC Grant (Nos. 12288201 \& 12471232).
\section{Preliminary}
\subsection{$X^{s,b}$ space}
In this section, we recall that  $X^{s,b}$  norm is  defined by
\begin{equation*}
    \left\|u\right\|_{X^{s,b}}:= \left(\int_{\mathbb{R}} \sum_{n}\<\tau + n^2\>^{2b} \<n\>^{2s} |\hat{u}(\tau,n)|^2 d\tau\right)^{\frac{1}{2}},
\end{equation*}
where $\hat{u}(\tau,n)$ is the spacetime Fourier transformation of $u$. One defines  $X^{s,b}[0,T]$ local in time  by
\begin{equation*}
    \left\|u\right\|_{X^{s,b}[0,T]}:= \inf_{\tilde{u}=u, t\in [0,T]} \left\|\tilde{u}\right\|_{X^{s,b}}.
\end{equation*}

These kinds of spaces were first introduced by Bourgain \cite{bourgain1993fourier} and are of essential use in random data  theory, \cite{bourgain1994periodic}, \cite{bourgain1996invariant}. We will only use them as a black box. We note that 
\begin{itemize}
\item $X^{s,b}[0,T] \subset C_tH^s([0,T]\times \mathbb{T})$ for any $b>\frac{1}{2}$.
\item $X^{s,b}$ space is useful, among others, because of the following $X^{s,b}$ smoothing estimate,
\begin{equation}
	 \left\|\eta(t) \int_{0}^{t} e^{i(t-s)\Delta}F(s)ds\right\|_{X^{s,b+1}[0,1]}
    \lesssim \left\|F\right\|_{X^{s,b}[0,1]}.
\end{equation}
\end{itemize}

\subsection{Hyper-contractivity estimate} 
We recall the following  well-known hypercontractivity of multi-Guassian from \cite{tzvetkov2010construction}, see also \cite{bourgain1996invariant}, \cite{fan20212d}. Those estimates are widely used in random data theory, and is probably well known in the probability community.\\

 We record it here for the convenience of readers.
\begin{lem}\label{lem: hyper}
    Let $ \{g^w_{n}\} $ be i.i.d complex Gaussian, let $ \{c_{n_{1},\ldots,n_{k}}\} $ be (fixed and deterministic) complex numbers. Let 
    \begin{align*}
        F_{k}(w)=\sum \limits_{n_{1},\ldots,n_{k}}c_{n_{1},\ldots,n_{k}}g^w_{n_{1}}\cdots g^w_{n_{k}}.
    \end{align*}
    Then there holds the associated large deviation type estimate
    \begin{align*}
        \mathbb{P}\{|F_{k}| > \lambda \|F_{k}\|_{L^{2}(\Upsilon )}\}\leq e^{-C\lambda ^{{2}/{k}}},\quad\forall\lambda>0.
    \end{align*}
\end{lem}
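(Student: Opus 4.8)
\medskip\noindent\emph{Plan of proof.}
The plan is to run the familiar ``moments to tails'' scheme. The heart of the matter is the moment comparison
\[
\|F_k\|_{L^p(\Upsilon)}\ \le\ C_k\, p^{k/2}\,\|F_k\|_{L^2(\Upsilon)},\qquad p\ge 2,
\]
with $C_k$ depending only on $k$; granting it, the tail bound follows from Chebyshev and an optimisation in $p$. Indeed, for an even integer $p\ge 2$,
\[
\mathbb{P}\big\{|F_k|>\lambda\|F_k\|_{L^2}\big\}\ \le\ \lambda^{-p}\|F_k\|_{L^2}^{-p}\,\mathbb{E}|F_k|^p\ \le\ \big(C_k p^{k/2}\lambda^{-1}\big)^p,
\]
and choosing $p$ comparable to $(\lambda/(eC_k))^{2/k}$ (rounded to an even integer) turns the right side into $e^{-c\lambda^{2/k}}$ for a constant $c=c(k)>0$. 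This step requires $\lambda$ to exceed an absolute, $k$-dependent constant so that $p\ge 2$; this is the range of interest, and for bounded $\lambda$ the asserted inequality is read with the constant adjusted accordingly (one may as well restrict to $\lambda\gtrsim 1$). The passage from an infinite to a finite sum is routine: truncate in $n_1,\dots,n_k$, apply the estimate, and pass to the limit using $\|F_k\|_{L^2}<\infty$.

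Thus everything reduces to the moment inequality, which is Nelson's hypercontractivity. Writing each complex Gaussian as $g_n=\tfrac{1}{\sqrt 2}(X_n+iY_n)$ with $\{X_n,Y_n\}$ i.i.d.\ real standard Gaussians, $F_k$ becomes a (complex-valued) polynomial of degree at most $k$ in the $\{X_n,Y_n\}$, hence lies in $\bigoplus_{j=0}^{k}\mathcal H_j$, where $\mathcal H_j$ denotes the $j$-th (homogeneous) Wiener chaos. On $\mathcal H_j$ the Ornstein--Uhlenbeck semigroup acts as $e^{tL}=e^{-jt}\,\mathrm{Id}$, so the hypercontractivity estimate $\|e^{tL}\|_{L^2\to L^p}=1$ for $e^{2t}=p-1$ gives $\|X\|_{L^p}\le (p-1)^{j/2}\|X\|_{L^2}$ for $X\in\mathcal H_j$, hence $\le (p-1)^{k/2}\|X\|_{L^2}$ whenever $j\le k$ and $p\ge 2$. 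Decomposing $F_k=\sum_{j=0}^{k}F_k^{(j)}$ into its ($L^2$-orthogonal) chaos components and using the triangle inequality followed by Cauchy--Schwarz in $j$,
\[
\|F_k\|_{L^p}\ \le\ \sum_{j=0}^k (p-1)^{k/2}\|F_k^{(j)}\|_{L^2}\ \le\ \sqrt{k+1}\,(p-1)^{k/2}\|F_k\|_{L^2},
\]
which is the moment inequality with $C_k=\sqrt{k+1}$ (and $p-1\le p$). Alternatively, and without the semigroup, one may argue combinatorially: for even $p=2m$ expand $\mathbb{E}|F_k|^{2m}$ by Wick's theorem as a sum over perfect matchings of the $2km$ Gaussian factors, note that the ``diagonal'' matchings (pairing the $2m$ copies of $F_k$ into $m$ pairs, matched factor-by-factor) reproduce $(\mathbb{E}|F_k|^2)^m$ up to the combinatorial weight $\tfrac{(2m)!}{2^m m!}(k!)^m$, and dominate the remaining matchings by the same quantity via iterated Cauchy--Schwarz; this yields $\mathbb{E}|F_k|^{2m}\le (C_k m)^{km}(\mathbb{E}|F_k|^2)^m$, which after taking $2m$-th roots and interpolating in $p$ is again the claim.

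The only genuine obstacle is securing the moment bound with the sharp power $p^{k/2}$: it is essential that the $L^p$ norm grows like $p^{k/2}$, not like $p^{k}$, since the exponent of $p$ is exactly what produces the exponent $2/k$ in the tail. This is precisely where hypercontractivity — equivalently, membership in the chaoses of order $\le k$, i.e.\ degree-$k$ homogeneity — enters, and it is the one nontrivial input; the presence of repeated or coincident indices $n_i$ is automatically absorbed by the chaos decomposition, and the rest is bookkeeping. Since this estimate is classical, we expect in the end merely to cite \cite{tzvetkov2010construction} (see also \cite{bourgain1996invariant}), the discussion above being recorded for completeness.
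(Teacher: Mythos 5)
The paper does not actually prove this lemma: it records it as a known black-box estimate and cites \cite{tzvetkov2010construction} (see also \cite{bourgain1996invariant}), which is exactly the classical argument you reproduce. Your write-up is a correct rendition of that standard proof: Nelson's hypercontractivity for the Ornstein--Uhlenbeck semigroup on the Wiener chaoses of order $\le k$ gives $\|F_k\|_{L^p}\lesssim_k p^{k/2}\|F_k\|_{L^2}$, and Chebyshev plus optimization in $p$ yields the tail $e^{-c\lambda^{2/k}}$; your handling of the small-$\lambda$ regime (restricting to $\lambda\gtrsim 1$, or adjusting the constant, which is how the statement is meant and how it is used in the paper) and of the truncation/limiting step is appropriate. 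The only soft spot is the alternative Wick-expansion route, where ``dominate the remaining matchings by iterated Cauchy--Schwarz'' is stated rather than carried out; since the semigroup argument is complete, this does not affect correctness, and in the end citing the references, as you propose, matches what the paper itself does.
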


\section{An overview of the proof}
Let $u, u_{0}$ be as in Theorem \ref{thm: main}.  Recall 
\begin{equation}\label{eq: u0}
u_{0}=\sum_{n}c_{n}g_{n}e^{inx}=\sum_{n}\frac{g_{n}}{\<n\>^{\frac{1}{2}+\theta}}e^{inx}.
\end{equation}

We will fix $\theta>0$. We will only consider $\theta\ll 1$ and the arguments easily extend to the case when $\theta$ takes larger value.\\

 For every $\epsilon>0$, we fix  $T_{\epsilon}\sim \epsilon^{-1}$.\\

The overall strategy to approach Theorem \ref{thm: main},  as in \cite{garrido2023large}, is to prove a precise LDP for the (modified) linear flow, and prove that the difference of $u$ and the (modified) linear flow is small with probability close to $1$.\\

Let modified linear flow $u_{app}$ be defined as
\begin{equation}\label{eq: uapp}
u_{app}(t,x):=e^{-2it\epsilon^{2}\frac{1}{2\pi}\|u_{0}\|_{L_{x}^{2}}^{2}}\sum_{n}c_{n}g_{n}e^{inx}e^{it\epsilon^{2}c_{n}^{2}|g_{n}|^{2}-itn^{2}}.
\end{equation}
(Strictly speaking, $u_{app}$ depends on $\epsilon$, for notation simplicity, we hide this $\epsilon$.)

Note that $u_{app}$ solves
\begin{equation}\label{eq: uappformula}
\begin{cases}
i\partial_{t}u_{app}+\Delta u_{app}=-\epsilon^{2}\NNN_{2}(u_{app})+2\epsilon^{2}\frac{1}{2\pi}\|u_{0}\|_{L_{x}^{2}}	u_{app},\\
u_{app}(0,x)=u_{0}(x)
\end{cases}
\end{equation}
We refer to \eqref{eq: n1}, \eqref{eq: n_{2}}, \eqref{eq: algebra} for the notation $\NNN_{2}$.

Theorem \ref{thm: main} follows from the following two lemmas.

\begin{lem}\label{lem: modlinearldp}
Let $z_{0}>0$, $T_{\epsilon}=O(\epsilon^{-1})$ and $u_{app}$ be defined as \eqref{eq: uapp}, one has the large deviation principle,
\begin{equation}\label{eq: modlinearldp}
\lim_{\epsilon\rightarrow 0^{+}}\epsilon \ln \mathbb{P}(\|u_{app}\|_{L_{t,x}^{\infty}([0,T_{\epsilon}]\times \mathbb{T})}>z_{0}\epsilon^{-\frac{1}{2}})=-\frac{z_{0}^{2}}{\sum_{n}c_{n}^{2}}	
\end{equation}

\end{lem}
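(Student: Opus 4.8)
The plan is to reduce the $L^\infty_{t,x}$ large deviation estimate for $u_{app}$ to a large deviation estimate for the \emph{frozen-phase} random series, exploiting the fact that the modulation factors appearing in \eqref{eq: uapp} are unimodular. Observe that
\begin{equation*}
u_{app}(t,x)=e^{-2it\epsilon^{2}\frac{1}{2\pi}\|u_{0}\|_{L_{x}^{2}}^{2}}\sum_{n}c_{n}g_{n}e^{i(nx-tn^{2}+t\epsilon^{2}c_{n}^{2}|g_{n}|^{2})},
\end{equation*}
so $|u_{app}(t,x)|=\bigl|\sum_{n}c_{n}g_{n}e^{i\varphi_{n}(t,x)}\bigr|$ with $\varphi_{n}(t,x)=nx-tn^{2}+t\epsilon^{2}c_{n}^{2}|g_{n}|^{2}$. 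For a \emph{fixed} $(t,x)$ and conditioned on the moduli $|g_n|$, the inner sum is a Gaussian with variance $\sum_n c_n^2 |g_n|^2 \approx \sum_n c_n^2$ (concentration of the $|g_n|^2$), so the one-point large deviation bound is exactly governed by $\sum_n c_n^2$. The content of the lemma is to upgrade this to a supremum over $[0,T_\epsilon]\times\mathbb{T}$ without losing in the exponential scale $\epsilon$.

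First I would prove the lower bound $\liminf_{\epsilon\to 0^+}\epsilon\ln\mathbb{P}(\cdots)\ge -z_0^2/\sum_n c_n^2$: this is the easy direction, since evaluating at $t=0$, $x=0$ gives $u_{app}(0,0)=\sum_n c_n g_n$, a mean-zero complex Gaussian with variance $\sum_n c_n^2$, and the standard Gaussian tail yields $\mathbb{P}(|u_{app}(0,0)|>z_0\epsilon^{-1/2})\ge e^{-(z_0^2/\sum_n c_n^2+o(1))/\epsilon}$. For the upper bound, the strategy is a union bound over a discretization of $[0,T_\epsilon]\times\mathbb{T}$ combined with a deterministic equicontinuity estimate. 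Truncate the series at frequency $|n|\le N$ with $N=N(\epsilon)$ a suitable power of $\epsilon^{-1}$; the tail $\sum_{|n|>N}c_n g_n e^{i\varphi_n}$ is controlled in $L^\infty_{t,x}$ by $\sum_{|n|>N}|g_n|/\<n\>^{1/2+\theta}$, which (using Lemma~\ref{lem: hyper} or a crude moment bound, and $\theta>0$) is smaller than $\epsilon^{-1/2}$-scale fluctuations with probability super-exponentially close to $1$ in $1/\epsilon$. On the truncated piece, $\partial_t$ and $\partial_x$ bring down factors of at most $N^2$ (times $\epsilon^2 c_n^2|g_n|^2$, also polynomial after a crude high-probability bound on $\max_n|g_n|$), so a mesh of spacing $\sim T_\epsilon^{-1}N^{-C}$ has cardinality polynomial in $\epsilon^{-1}$; applying the fixed-point Gaussian upper bound (again conditioning on the $|g_n|$, now using the \emph{upper} concentration $\sum_n c_n^2|g_n|^2\le (1+\delta)\sum_n c_n^2$ on an event of probability $1-e^{-c/\epsilon \cdot \text{large}}$, which is where one must be a little careful because the number of terms is infinite — here one splits again at a moderate frequency cutoff and uses that the tail contributes negligibly to the variance) kills the polynomial union-bound factor since $\epsilon\ln(\mathrm{poly}(\epsilon^{-1}))\to 0$.

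The main obstacle I expect is the interplay between the discretization scale and the random amplitudes $\epsilon^2 c_n^2|g_n|^2$ inside the phase: because $|g_n|$ is unbounded, one cannot uniformly bound the time-derivative of $u_{app}$, and one must work on the high-probability event $\{\max_{|n|\le N}|g_n|\le (\ln N)^{10}\}$ (or similar), check that its complement has probability $\le N e^{-c(\ln N)^{20}}$ which is indeed $o$ of everything on the $\epsilon$-scale, and then verify the equicontinuity modulus is still only polynomial in $\epsilon^{-1}$ there. A secondary subtlety is that the conditional-Gaussian variance is a genuine random variable $V(\omega)=\sum_n c_n^2|g_n|^2$ rather than a constant; one handles this by a further dyadic decomposition of the tail of $\{c_n\}$ and Lemma~\ref{lem: hyper} applied to $V-\mathbb{E}V$ to get $\mathbb{P}(V>(1+\delta)\sum c_n^2)\le e^{-c_\delta/\epsilon^{2}}$ say — far smaller than $e^{-C/\epsilon}$ — so that on the complementary event the one-point bound is $e^{-(1-\delta)z_0^2/\sum c_n^2 \cdot 1/\epsilon}$, and then one lets $\delta\to 0$ at the end. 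Once these two points are settled, combining the tail estimate, the union bound over the polynomial-size mesh, and the conditional Gaussian tail gives $\limsup_{\epsilon\to 0^+}\epsilon\ln\mathbb{P}(\cdots)\le -z_0^2/\sum_n c_n^2$, matching the lower bound.
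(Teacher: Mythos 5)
Your overall architecture (pointwise Gaussian bound, polynomial mesh in $(t,x)$ plus a derivative/equicontinuity bound, crude high-probability control of $\max_n|g_n|$ to tame the random phases $\epsilon^2c_n^2|g_n|^2$) is the same as the paper's, but two specific steps do not survive scrutiny. The serious one is your treatment of the high-frequency tail: you bound $\sup_{t,x}\bigl|\sum_{|n|>N}c_ng_ne^{i\varphi_n}\bigr|$ by the triangle inequality as $\sum_{|n|>N}|g_n|\langle n\rangle^{-1/2-\theta}$. Since $\mathbb{E}|g_n|$ is a fixed positive constant and the terms are independent and nonnegative, this series diverges almost surely whenever $\theta\le\frac12$ — which is exactly the range the lemma is supposed to cover (the $\theta>\frac12$, $\ell^1$ case is the easy one already noted in the paper's remark). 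No choice of $N(\epsilon)$ rescues an a.s.\ infinite bound. The paper avoids this by never using an $\ell^1$ bound on the tail: it decomposes the high frequencies into dyadic blocks $P_M$, $M\ge N\sim\epsilon^{-1000/\theta}$, and for each block runs the same mesh-plus-derivative argument with the \emph{Gaussian} pointwise bound (square-root cancellation) at $\sim M^{O(1)}$ grid points, obtaining $\sup_{t\le T_\epsilon}\|P_Me^{it\Delta}u_0\|_{L^\infty}\lesssim M^{-\theta/100}$ up to probability $e^{-M^{\theta/150}}$, and then sums over $M$. You would need to import that dyadic argument for your tail.

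The second problem is your conditional-variance step: $V=\sum_nc_n^2|g_n|^2$ is an $\epsilon$-independent random variable, so $\mathbb{P}\bigl(V>(1+\delta)\sum_nc_n^2\bigr)$ is a fixed positive constant, not $e^{-c_\delta/\epsilon^2}$ (hypercontractivity only gives a bound of the form $e^{-C_\delta}$ here, since the threshold is at a fixed multiple of $\|V-\mathbb{E}V\|_{L^2}$); with only a constant-probability exceptional set your upper-bound decomposition cannot reach the scale $e^{-C/\epsilon}$. Moreover, after conditioning on the moduli the one-point sum is a sum of fixed-length vectors with uniform phases, not a Gaussian, so even on the good event you would still need to justify the sharp subgaussian constant. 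The paper sidesteps all of this: for each fixed $t$, $g_ne^{it\epsilon^2c_n^2|g_n|^2}$ is again an i.i.d.\ standard complex Gaussian family (the invariance lemma quoted from Garrido--Grande--Kurianski--Staffilani), so the pointwise law of the modified flow is exactly Rayleigh with variance $\sum_nc_n^2$ and Lemma~\ref{lem: pointwiseldp} applies verbatim; the only new item is that the time derivative picks up the harmless extra factor $\epsilon^2c_k^2|g_k|^2\ll k^2$. Replacing your two steps by these two devices turns your outline into the paper's proof.
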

\begin{rem}\label{rem: reducetolinear}
One can see from our proof that the result actually holds for $T_{\epsilon}=e^{o(\epsilon^{-1})}$.
\end{rem}

and 
\begin{lem}\label{lem: errorcontrol}
Let $u_{0}$ be as in \eqref{eq: u0}, and $u_{app}$ be as in \eqref{eq: uapp}.  Let $u$ be the associated solution to \eqref{eq: main} with initial data $u_{0}$. Let $\theta>0$ be fixed.  Let $T_{\epsilon}=O(\epsilon^{-1})$. Let $C_{0}\gg 1$. Then there exists $s>\frac{1}{2}$, and $\delta_{1}>0$, so that 
\begin{equation}\label{eq: errorcontrol}
-\ln\mathbb{P}(\|u-u_{app}\|_{L_{t}^{\infty}H^{s}([0,T_{\epsilon}]\times \mathbb{T})}\geq \epsilon^{-\frac{1}{2}+\delta_{1}})\geq  C_{0}\epsilon^{-1}
\end{equation}
 
\end{lem}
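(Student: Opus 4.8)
The plan is to write $w := u - u_{app}$, derive the equation it solves, and run a bootstrap/continuity argument on the time interval $[0,T_\epsilon]$ that closes with overwhelming probability. Subtracting \eqref{eq: uappformula} from \eqref{eq: main} and using the decomposition \eqref{eq: algebra}, the nonlinearity $\epsilon^2|u|^2u$ minus its modified-linear counterpart splits into: (i) a genuinely trilinear resonant-free piece $\epsilon^2\NNN_1(u,u,u)$, which is the source of nonlinear smoothing; (ii) differences $\epsilon^2(\NNN_2(u)-\NNN_2(u_{app}))$ and the mass-rotation terms, which are either multilinear in $w$ or involve the $L^2$-mass of $u_0$ (which by Lemma \ref{lem: hyper} is comparable to $\sum_n c_n^2$ with overwhelming probability); and (iii) all the cross terms obtained by replacing one or more copies of $u$ by $u_{app}$ and the rest by $w$. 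The first step is to set up, on each unit-length subinterval $[t_0,t_0+1]$, the $X^{s,b}$ estimate for the Duhamel operator (the smoothing estimate quoted in Section 2.1), so that controlling $w$ in $C_tH^s$ on $[0,T_\epsilon]$ reduces to $\lesssim T_\epsilon \approx \epsilon^{-1}$ many local-in-time estimates, each contributing a factor $\epsilon^2$ from the nonlinearity.

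The second step is to establish the two multilinear inputs. First, a \emph{deterministic} multilinear $X^{s,b}$ estimate for $\NNN_1$: because the phase $n_1^2-n_2^2+n_3^2-(n_1-n_2+n_3)^2 = 2(n_1-n_2)(n_3-n_2)$ never vanishes on the summation set of \eqref{eq: n1}, one gains a genuine derivative, giving $\|\NNN_1(f,f,f)\|_{X^{s,b'}}\lesssim \|f\|_{X^{s,b}}^3$ with $b'>b$ — this is the ``nonlinear smoothing'' and the reason we can land in $H^s$ with $s>\tfrac12$ strictly. This is exactly the Bourgain $X^{s,b}$ computation I would invoke (used as a black box, per the paper's footnote). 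Second, a \emph{probabilistic} control of $u_{app}$ itself in $X^{s,b}[t_0,t_0+1]$: since $u_{app}$ is, up to deterministic phases, a randomized Fourier series with the same coefficients $c_n=\langle n\rangle^{-1/2-\theta}$, Lemma \ref{lem: hyper} (hypercontractivity) gives $\|u_{app}\|_{X^{s,b}[t_0,t_0+1]}\lesssim 1$ (in fact $\lesssim \epsilon^{-\kappa}$ for any small $\kappa$) outside an event of probability $\leq e^{-C\epsilon^{-1}}$ after a union bound over the $\approx\epsilon^{-1}$ subintervals; here one must be slightly careful that the extra time-dependent factors $e^{it\epsilon^2 c_n^2|g_n|^2}$ in \eqref{eq: uapp} are harmless on a unit time interval since $\epsilon^2 c_n^2|g_n|^2 = O(\epsilon^2\langle g_n\rangle^2)$ and can be absorbed. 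A companion step is $\|u_{app}\|_{L^\infty_{t,x}}\lesssim \epsilon^{-1/2+}$, $\|u\|_{L^\infty_t L^2_x}=\|u_0\|_{L^2}\lesssim 1$ with overwhelming probability, which we get from (the proof of) Lemma \ref{lem: modlinearldp} and conservation of mass.

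The third step is the bootstrap. Assume $\|w\|_{X^{s,b}[t_0,t_0+1]}\leq \epsilon^{-1/2+\delta_1}$ on a maximal interval; feed this plus the bounds on $u_{app}$ into the trilinear estimates for all the terms in (i)–(iii). The worst term is the purely-$u_{app}$ term $\epsilon^2\NNN_1(u_{app},u_{app},u_{app})$, which by the smoothing estimate is $\lesssim\epsilon^2\|u_{app}\|_{X^{s,b}}^3\lesssim\epsilon^{2-}$ per unit interval, hence $\lesssim\epsilon^{1-}$ over $[0,T_\epsilon]$ — comfortably below $\epsilon^{-1/2+\delta_1}$; terms with at least one $w$ are smaller still, each carrying a power of $\epsilon^{-1/2+\delta_1}$ against the $\epsilon^2$ gain, and assembling via Gronwall over the $\epsilon^{-1}$ intervals produces at worst $e^{C}$-type growth (this is precisely the standard Gronwall step that, pushed slightly, yields the $c\epsilon^{-1}|\ln\epsilon|$ time scale mentioned in the remarks). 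Summing the exceptional probabilities — $e^{-C\epsilon^{-1}}$ from hypercontractivity on each of $\epsilon^{-1}$ subintervals, plus the (smaller) exceptional event from the $u_{app}$ sup-bound — gives the claimed $-\ln\mathbb{P}(\cdots)\geq C_0\epsilon^{-1}$, with $C_0$ as large as we like by taking the hypercontractivity exponent large.

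I expect the main obstacle to be the \emph{interface between the unit-time $X^{s,b}$ theory and the long $\epsilon^{-1}$ time scale}: one must choose $b$ slightly above $\tfrac12$, handle the loss of $\epsilon^{-C(b-1/2)}$-type factors when restricting/gluing $X^{s,b}$ pieces across $\approx\epsilon^{-1}$ subintervals, and make sure these losses are beaten by the $\epsilon^2$ per interval — which forces the smoothing gain $b'-b$ and the margin $s-\tfrac12$ to be chosen in the right order relative to $\theta$ and $\delta_1$. The secondary subtlety, flagged in the paper's overview, is that $u_{app}$ rather than the bare free evolution must be subtracted: the term $2\epsilon^2\tfrac{1}{2\pi}\|u_0\|_{L^2_x}u_{app}$ and the $\NNN_2$ resonances are \emph{not} perturbative over time $\epsilon^{-1}$ (they produce $O(1)$ phase rotations), which is exactly why they are built into $u_{app}$ and only the non-resonant $\NNN_1$ is left to be estimated perturbatively; verifying that $w$'s equation indeed contains no surviving resonant self-interaction of $u_{app}$ is the bookkeeping one must get exactly right.
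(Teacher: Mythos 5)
Your outline gets the high-level architecture right (subtract $u_{app}$ so that only the non-resonant piece $\NNN_{1}$ must be treated perturbatively, exploit $\Omega=2(n_1-n_2)(n_3-n_2)\neq 0$, run a continuity/Gronwall argument over $[0,T_\epsilon]$), but two of your key inputs fail, and they are exactly where the paper has to work. First, the probabilistic bound $\|u_{app}\|_{X^{s,b}[t_0,t_0+1]}\lesssim 1$ (or $\epsilon^{-\kappa}$) for some $s>\tfrac12$ is simply false in the regime the lemma covers: with $c_n=\langle n\rangle^{-1/2-\theta}$ and $\theta\ll 1$, one has $\mathbb{E}\|u_{app}(t)\|_{H^s}^2=2\sum_n\langle n\rangle^{2s-1-2\theta}=\infty$ for any $s\geq\theta$, so $u_{app}$ almost surely has only the (low) regularity of the data and no hypercontractivity argument can place it in $X^{s,b}$, $s>\tfrac12$. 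Consequently your deterministic trilinear estimate $\|\NNN_1(f)\|_{X^{s,b'}}\lesssim\|f\|_{X^{s,b}}^3$ (same $s$ on both sides) cannot be applied to $f=u_{app}$ and, even formally, it gains nothing in the spatial index, so it cannot produce output in $H^s$, $s>\tfrac12$, from data that is only $H^{\theta-}$. The smoothing here is intrinsically probabilistic: the paper gains it by hypercontractivity applied to the explicit Gaussian trilinear sums (Case 1 of Lemma \ref{lem: normalform}, plus the deterministic sums \eqref{eq: key1}--\eqref{eq: key2}), with a time-mesh-plus-derivative argument to upgrade fixed-time bounds to $\sup_\tau$ bounds; the local random-data input (Lemma \ref{lem: localrandom}) is used only to continue the solution interval by interval, not to extract the long-time smallness.

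Second, your size and probability bookkeeping does not close at the probability scale the lemma demands. Since the exceptional set must have probability $\leq e^{-C_0\epsilon^{-1}}$ with $C_0$ arbitrarily large, you may not assume $\|u_0\|_{L^2}\lesssim 1$ or $|g_n|\lesssim 1$ ``with overwhelming probability''; the best admissible bounds are of the type $\|u_0\|_{L^2}\lesssim\epsilon^{-1/2}$, $|c_ng_n|\lesssim\epsilon^{-1/2-\delta_1}\langle n\rangle^{-1/2-\theta+\delta_4}$ (the sets $F_\epsilon$, $G_\epsilon$ in the paper). With these sizes the per-unit-interval contribution of $\epsilon^2\NNN_1(u_{app})$ is of order $\epsilon^2\cdot(\epsilon^{-1/2})^3=\epsilon^{1/2}$, not $\epsilon^{2-}$, and summing over $\sim\epsilon^{-1}$ unit intervals gives $\epsilon^{-1/2}$, which does not beat the target $\epsilon^{-1/2+\delta_1}$: your interval-by-interval accumulation has no margin. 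This is precisely why the paper does the normal form globally in time on $[0,t]$ (integrating by parts in $\tau$ against $e^{-i\tau\Omega}$): the boundary terms are estimated once, with no factor of $T_\epsilon$, and the time-integrated terms become quintic, carrying an extra $\epsilon^2$ that absorbs the Hölder loss $t\lesssim\epsilon^{-1}$; the remaining resonant-difference terms give the Gronwall factor $\epsilon^2\|u_0\|_{L^2}^2\lesssim\epsilon$ on $F_\epsilon$, which is exactly what makes $T_\epsilon\sim\epsilon^{-1}$ critical (a fact your assumption $\|u_0\|_{L^2}\lesssim 1$ would erase). Without replacing your two inputs by these mechanisms, the proposed proof does not go through.
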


Once Lemma \ref{lem: modlinearldp} and Lemma \ref{lem: errorcontrol} are established, Theorem \ref{thm: main} follows by the similar argument  as in \cite{garrido2023large}.
\begin{proof}[Proof of Theorem \ref{thm: main} assuming Lemma \ref{lem: modlinearldp} and Lemma \ref{lem: errorcontrol}.]
We first explain the overall idea.\\
Since we are computing large deviation for $\|u\|_{L^{\infty}}$ at scale $\epsilon^{-1/2}$, any error of form of $\epsilon^{-1/2+\delta}, \delta>0$ is irrelevant. It does take a probability $e^{-C_{0}\epsilon^{-1}}$ to reduce $u$ to $u_{app}$. But here $C_{0}$ can be chosen as arbitrary large.\\
 Note that for any $c$ fixed,  $-\epsilon\ln (e^{-c\epsilon^{-1}}+e^{-C_{0}\epsilon^{-1}})\rightarrow c$ for all $C_{0}$ large than $c$. \\
 
 We present more details for the convenience of readers.
 
 We start with an upper bound for $\mathbb{P}(\left(\sup \limits_{t \in [0,T_{\epsilon}]}\sup \limits_{x \in \mathbb{T}} |u(t,x)| > z_0 \epsilon^{-1/2} \right)$. It is bounded by 
 \begin{equation}\label{eq: twoupper}
 \mathbb{P} \left( \left\| u_{\mathrm{app}} \right\|_{L_{x,t}^\infty } > z_0 \epsilon^{-1/2} - \epsilon^{-1/2 + \delta_1} \right)
 +\mathbb{P}\left( \left\| u - u_{\mathrm{app}}\right\|_{C_t[0,T_{\epsilon}]H^s} > \epsilon^{-1/2 + \delta_1} \right)
 \end{equation} 
 Via Lemma \ref{lem: modlinearldp}, the first term in \eqref{eq: twoupper} is bounded by
 \begin{equation}
 	e^{-\frac{\epsilon ^{-1 } z_0^2}{\sum \limits_{k \in \mathbb{Z}} c_k^2}+o(\epsilon ^{-1})}.
 \end{equation}
 
 Via Lemma \ref{lem: errorcontrol}, the second term is bounded by $e^{-C_{0}\epsilon^{-1}}$ for $C_{0}$ large.
 
 Thus, we have 
 \begin{equation}\label{eq: upperbound}
 	\mathbb{P}(\left(\sup \limits_{t \in [0,T_{\epsilon}]}\sup \limits_{x \in \mathbb{T}} |u(t,x)| > z_0 \epsilon^{-1/2} \right)\leq 2e^{-\frac{\epsilon ^{-1 } z_0^2}{\sum \limits_{k \in \mathbb{Z}} c_k^2}+o(\epsilon^{-1})}
 \end{equation}

 Lower bound is similar, (the point is that we use LDP for the modified linear flow and only need an upper bound of probability to control the error by Lemma \ref{lem: errorcontrol}) and we get 
 \begin{equation}\label{eq: lowerbound}
 	\mathbb{P}\left(\sup \limits_{t \in [0,T_{\epsilon}]}\sup \limits_{x \in \mathbb{T}} |u(t,x)| > z_0 \epsilon^{-1/2} \right)\geq \frac{1}{2}e^{-\frac{\epsilon ^{-1 } z_0^2}{\sum \limits_{k \in \mathbb{Z}} c_k^2}+o(\epsilon^{-1})}
 \end{equation}

 Combine \eqref{eq: lowerbound} and \eqref{eq: upperbound}, the desired LDP estimate for $u$ follows.

\end{proof}

By \cite[Lemma 4.2]{garrido2023large} or \cite[Lemma 4.2]{Tzvetkov2017quasi}, $\eta$ is a standard complex Gaussian if and only $\eta e^{it|\eta|^{2}}$ is a standard complex Gaussian. Our proof for Lemma \ref{lem: modlinearldp} actually yields to the same result\footnote{We will not use this result in the proof, and strictly speaking those two results, though same, are not equivalent, but we feel it is good to record this estimate for linear flow here.} for the linear free flow.
\begin{lem}\label{lem: ldpforlinearflow}
	Let $z_{0}>0$, $T_{\epsilon}=O(\epsilon^{-1})$ and $u_{0}$ be as in \eqref{eq: u0}, one has the large deviation principle
\begin{equation}\label{eq: linearldp}
\lim_{\epsilon\rightarrow 0^{+}}\epsilon \ln \mathbb{P}(\|e^{it\Delta}u_{0}\|_{L_{t,x}^{\infty}([0,T_{\epsilon}]\times \mathbb{T})}>z_{0}\epsilon^{-\frac{1}{2}})=-\frac{z_{0}^{2}}{\sum_{n}c_{n}^{2}}	
\end{equation}
\end{lem}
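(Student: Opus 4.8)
The plan is to prove Lemma \ref{lem: ldpforlinearflow} as a direct consequence of (a simplified version of) the method used for Lemma \ref{lem: modlinearldp}, reducing the sup over a space–time region to a union bound over a discrete net and then invoking Lemma \ref{lem: hyper}. First I would record the exact constant: write $\sigma^2 := \sum_n |c_n|^2$, so that for any fixed $(t,x)$ the random variable $e^{it\Delta}u_0(t,x) = \sum_n c_n g_n e^{i(nx - n^2 t)}$ is a mean-zero complex Gaussian of variance $\sigma^2$, and hence $\mathbb{P}(|e^{it\Delta}u_0(t,x)| > z_0\epsilon^{-1/2}) = e^{-z_0^2\epsilon^{-1}/\sigma^2}$. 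This pointwise computation gives the lower bound in \eqref{eq: linearldp} essentially for free: pick a favorable point (say $t=0$, $x=0$), so $\mathbb{P}(\|e^{it\Delta}u_0\|_{L^\infty_{t,x}} > z_0\epsilon^{-1/2}) \geq \mathbb{P}(|u_0(0)| > z_0\epsilon^{-1/2}) = e^{-z_0^2\epsilon^{-1}/\sigma^2}$, which yields $\liminf_{\epsilon\to 0^+} \epsilon \ln \mathbb{P}(\cdots) \geq -z_0^2/\sigma^2$.

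The substance is the matching upper bound. The approach is a chaining/net argument. The function $F(t,x) = e^{it\Delta}u_0(t,x)$ is, for fixed $\omega$, smooth in $(t,x)$ but a priori only controlled on the long interval $[0,T_\epsilon]$ with $T_\epsilon \approx \epsilon^{-1}$; I would first truncate the Fourier series at frequency $|n| \leq N$ for $N$ a suitable power of $\epsilon^{-1}$ (the tail $\sum_{|n|>N} c_n g_n e^{i(nx-n^2t)}$ is controlled in $L^\infty_{t,x}$ with overwhelming probability, i.e. probability $1 - e^{-\epsilon^{-100}}$ say, by Lemma \ref{lem: hyper} applied to the $k=1$ chaos together with a crude Sobolev-in-$(t,x)$ bound, since $\theta>0$ makes the tail square-summable with room to spare). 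For the truncated piece $F_N$, the space–time gradient is bounded by $\|\partial_{t,x} F_N\|_{L^\infty_{t,x}} \lesssim N^2 \sum_{|n|\le N}|c_n||g_n|$, which with overwhelming probability is $\lesssim N^2 \cdot N^{1/2-\theta} =: L_\epsilon$, a polynomial in $\epsilon^{-1}$. Then I take a net $\mathcal{G} \subset [0,T_\epsilon]\times\mathbb{T}$ of spacing $\sim \epsilon^{10}/L_\epsilon$, so $|\mathcal{G}| \le e^{C\ln(1/\epsilon)} \le e^{o(\epsilon^{-1})}$, and on the complement of the bad Lipschitz event we have $\|F_N\|_{L^\infty_{t,x}} \leq \max_{(t,x)\in\mathcal{G}} |F_N(t,x)| + \epsilon^{10}$. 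A union bound over $\mathcal{G}$ using the pointwise Gaussian tail (for $F_N$, with variance $\sigma_N^2 \le \sigma^2$) gives
\begin{equation*}
\mathbb{P}\Big(\max_{(t,x)\in\mathcal{G}}|F_N(t,x)| > z_0\epsilon^{-1/2} - \epsilon^{10}\Big) \leq |\mathcal{G}|\, e^{-(z_0-o(1))^2\epsilon^{-1}/\sigma^2} = e^{-z_0^2\epsilon^{-1}/\sigma^2 + o(\epsilon^{-1})},
\end{equation*}
and adding the two overwhelmingly-small bad-event probabilities $e^{-\epsilon^{-100}}$ does not affect the leading exponential. Taking $\epsilon\ln$ and letting $\epsilon\to 0^+$ gives $\limsup_{\epsilon\to 0^+}\epsilon\ln\mathbb{P}(\cdots) \leq -z_0^2/\sigma^2$, completing the proof.

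The main obstacle — and the only place where the optimal decay rate $c_n = \langle n\rangle^{-1/2-\theta}$, $\theta>0$, is genuinely used — is controlling the high-frequency tail and the Lipschitz constant $L_\epsilon$ uniformly over the long time window $[0,T_\epsilon]$ while keeping the net cardinality sub-exponential in $\epsilon^{-1}$. The delicate balance is that $N$ must be large enough that the truncation error is negligible at scale $\epsilon^{-1/2}$ (this needs $\sum_{|n|>N}|c_n|^2 \ll \epsilon^{1+}$, i.e. $N \gtrsim \epsilon^{-1/(2\theta)-}$, which is where $\theta>0$ is essential and $\theta\to 0$ is borderline), yet the induced $L_\epsilon$ and hence $\ln|\mathcal{G}|$ stay $o(\epsilon^{-1})$ — which holds because $L_\epsilon$ is merely polynomial in $\epsilon^{-1}$, so $\ln|\mathcal{G}| = O(\ln(1/\epsilon)) = o(\epsilon^{-1})$ regardless of the polynomial degree. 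This is exactly the "rather straightforward proof" advertised in the introduction, and Remark \ref{rem: reducetolinear}'s extension to $T_\epsilon = e^{o(\epsilon^{-1})}$ falls out of the same computation since even then $\ln|\mathcal{G}| = o(\epsilon^{-1})$.
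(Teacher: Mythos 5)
Your overall scheme is the same as the paper's: the exact pointwise Rayleigh/Gaussian computation for the constant (Lemma \ref{lem: pointwiseldp}), the lower bound from a single space--time point, and an upper bound by truncating at a frequency $N$ polynomial in $\epsilon^{-1}$, controlling $\nabla_{t,x}$ of the truncated flow polynomially, placing a polynomial-cardinality net on $[0,T_\epsilon]\times\mathbb{T}$, and union-bounding with the pointwise Gaussian tail so that the net only costs $o(\epsilon^{-1})$ in the exponent. That part is fine and matches Lemma \ref{lem: lowfrequency} and the concluding net argument in the paper.

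The genuine gap is your treatment of the high-frequency tail $P_{>N}e^{it\Delta}u_0$. You propose to control it in $L^\infty_{t,x}$ by ``Lemma \ref{lem: hyper} for the $k=1$ chaos together with a crude Sobolev-in-$(t,x)$ bound, since $\theta>0$ makes the tail square-summable.'' Square-summability of $(c_n)_{|n|>N}$ only gives the fixed-$(t,x)$ variance $\sim N^{-2\theta}$; to upgrade to a supremum over the long time window you need either a Sobolev embedding or a net. The Sobolev route needs $\|P_{>N}u_0\|_{H^s}$ with $s>\frac12$, but $\mathbb{E}\|P_{>N}u_0\|_{H^s}^2=\sum_{|n|>N}\langle n\rangle^{2s-1-2\theta}$ diverges whenever $s\ge\theta$, so for the new regime $0<\theta\le\frac12$ (the case $\theta>\frac12$ being the easy $\ell^1$ one) the data simply is not in $H^{1/2+}$ and this step fails. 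A net on the whole tail also fails, because the tail has unbounded frequency support and hence no finite (let alone polynomial) space--time Lipschitz constant. The repair is exactly the paper's Lemma \ref{lem: uniformhigh}: decompose the tail into dyadic blocks $P_M$, $M\ge N$; each block has bandwidth $M$, pointwise variance $\sim M^{-2\theta}$, and, off an event of probability $e^{-M}$, Lipschitz constant $\lesssim M^{100}$, so a per-block net plus union bound gives $\sup_{t\le T_\epsilon}\|P_M e^{it\Delta}u_0\|_{L^\infty}\lesssim M^{-\theta/100}$ up to probability $e^{-M^{\theta/150}}$; summing over dyadic $M\ge N\sim\epsilon^{-1000/\theta}$ controls the full tail with failure probability far below $e^{-C_0\epsilon^{-1}}$ for every $C_0$. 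With this block decomposition substituted for your tail step, your argument coincides with the paper's proof.
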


We will prove Lemma \ref{lem: modlinearldp} and Lemma \ref{lem: ldpforlinearflow} in Section \ref{sec: ldplinear}.

Lemma \ref{lem: errorcontrol} mainly follows from the following two lemmas.

The first one is random data local theory

\begin{lem}\label{lem: localrandom}
Let $u, u_{app}, T_{\epsilon}$ be as in Lemma \ref{lem: errorcontrol}. For all (fixed) $0<\delta_{1}\ll 1$ and $\theta>0$. There exists some $s>\frac{1}{2}$, so that the following holds.

Assume that for some $t_{0}\leq T_{\epsilon}$, one has 
\begin{equation}\label{eq: close}
\|u(t_{0})-u_{app}(t_{0})\|_{H^{s}_{x}}\leq \epsilon^{-\frac{1}{2}},	
\end{equation}
then one has up to (extra) probability $e^{-\epsilon^{-1-\delta_{2}}}$, for some $\delta_{2}>0$,
\begin{equation}\label{eq: xsbmain}
\|e^{2it\epsilon^{2}\frac{1}{2\pi}\|u_{0}\|_{2}^{2}}u(t)-e^{2it_{0}\epsilon^{2}\frac{1}{2\pi}\|u_{0}\|_{2}^{2}}e^{i(t-t_{0})\Delta}u(t_{0})\|_{X^{s,b}[t_{0}, t_{0}+1]}\leq \epsilon^{\frac{1}{2}-\delta_{1}} 
\end{equation}

\end{lem}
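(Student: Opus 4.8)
The plan is to reduce \eqref{eq: xsbmain} to a local (unit-time) $X^{s,b}$ estimate for the Duhamel term and then run the standard random-data contraction/bootstrap. Write $v(t):=e^{2it\epsilon^{2}\frac{1}{2\pi}\|u_{0}\|_{2}^{2}}u(t)$; since the gauge factor $e^{2it\epsilon^{2}\frac{1}{2\pi}\|u_{0}\|_{2}^{2}}$ is a unimodular constant-in-$x$ multiplier, it commutes with $e^{it\Delta}$ and with every $X^{s,b}$ norm, so the left side of \eqref{eq: xsbmain} equals $\|v(t)-e^{i(t-t_{0})\Delta}v(t_{0})\|_{X^{s,b}[t_{0},t_{0}+1]}$. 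Now $v$ solves an equation whose nonlinearity, after removing the $\NNN_{2}$ (diagonal resonant) piece via \eqref{eq: algebra} and absorbing the mean term into the gauge, is $\epsilon^{2}\NNN_{1}(v,v,v)$ (modulo lower-order gauge corrections). By Duhamel,
\[
v(t)-e^{i(t-t_{0})\Delta}v(t_{0})=-i\epsilon^{2}\int_{t_{0}}^{t}e^{i(t-s)\Delta}\NNN_{1}(v,v,v)(s)\,ds,
\]
and the $X^{s,b+1}$ smoothing estimate recalled in the Preliminary (after translating $[t_{0},t_{0}+1]$ to $[0,1]$ and inserting a cutoff $\eta$) bounds this by $\lesssim \epsilon^{2}\|\NNN_{1}(v,v,v)\|_{X^{s,b}[t_{0},t_{0}+1]}$, and then by the embedding $X^{s,b+1}\hookrightarrow X^{s,b}$ it also controls the $X^{s,b}$ norm we want.

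The next step is the trilinear estimate $\|\NNN_{1}(v_{1},v_{2},v_{3})\|_{X^{s,b}[0,1]}\lesssim \prod_{j}\|v_{j}\|_{X^{s,b}[0,1]}$ for a suitable $s>\tfrac12$ and $b$ slightly above $\tfrac12$; this is exactly the classical periodic cubic NLS multilinear estimate of Bourgain \cite{bourgain1993fourier} with the resonant frequency excluded, which the paper has declared it uses as a black box. Feeding this in, and writing $R:=v-e^{i(\cdot-t_{0})\Delta}v(t_{0})$ and $w:=e^{i(\cdot-t_{0})\Delta}v(t_{0})$, one gets a closed inequality of the form $\|R\|_{X^{s,b}}\lesssim \epsilon^{2}\big(\|w\|_{X^{s,b}}+\|R\|_{X^{s,b}}\big)^{3}$ on $[t_{0},t_{0}+1]$. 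The free-evolution piece $w$ must be split: $w=e^{i(t-t_{0})\Delta}\big(u_{app}(t_{0})+(u(t_{0})-u_{app}(t_{0}))\big)$ up to the gauge. The second summand has $H^{s}_{x}$ norm $\le\epsilon^{-1/2}$ by hypothesis \eqref{eq: close}, hence $X^{s,b}[t_{0},t_{0}+1]$ norm $\lesssim\epsilon^{-1/2}$ by the free-solution estimate $\|\eta\, e^{it\Delta}f\|_{X^{s,b}}\lesssim\|f\|_{H^{s}}$. For the first summand, note $u_{app}(t_{0})=\sum_{n}c_{n}g_{n}e^{i\phi_{n}}e^{inx}$ is, up to $n$-dependent phases, the original Gaussian data; since multiplying each $g_{n}$ by a deterministic unimodular number leaves the law invariant, the random quantity $\|e^{it\Delta}u_{app}(t_{0})\|_{X^{s,b}[t_{0},t_{0}+1]}$ has the same distribution as $\|\eta\,e^{it\Delta}u_{0}\|_{X^{s,b}[0,1]}$. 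This is a Gaussian chaos of order one in the $g_{n}$'s, so by the hypercontractivity Lemma \ref{lem: hyper} it is $\lesssim \lambda\cdot\|u_{0}\|_{H^{s}}$ (with $s>1/2$, $\|u_0\|_{H^s}=\infty$, so one instead computes the $L^2(\Omega)$ norm of the chaos directly: $\|\,\|\eta e^{it\Delta}u_0\|_{X^{s,b}[0,1]}\,\|_{L^2(\Omega)}^2\approx\sum_n \langle n\rangle^{2s}|c_n|^2<\infty$ precisely when $s<\theta$, so we must pick $\tfrac12<s<\min(\theta,?)$ — hence the role of "$\theta\ll1$, some $s>\tfrac12$") outside probability $e^{-c\lambda^{2}}$. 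Choosing $\lambda=\epsilon^{-\alpha}$ with $\alpha$ a bit below $\tfrac12$ (so that $e^{-c\lambda^2}=e^{-c\epsilon^{-2\alpha}}\le e^{-\epsilon^{-1-\delta_2}}$ fails — rather we need $2\alpha>1+\delta_2$, so take $\alpha$ slightly above $\tfrac12$ and check the resulting bound on $w$ is still $O(\epsilon^{-1/2-})$) gives $\|w\|_{X^{s,b}[t_{0},t_{0}+1]}\lesssim\epsilon^{-1/2-}$ off an event of probability $\le e^{-\epsilon^{-1-\delta_2}}$.

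Finally, with $\|w\|_{X^{s,b}}\le M\epsilon^{-1/2-\delta}$ for a small $\delta$, a continuity/bootstrap argument in the closed inequality $\|R\|\lesssim\epsilon^{2}(\|w\|+\|R\|)^{3}$ shows that as long as $\|R\|\le\|w\|$ we have $\|R\|\lesssim\epsilon^{2}\|w\|^{3}\lesssim\epsilon^{2}\epsilon^{-3/2-3\delta}=\epsilon^{1/2-3\delta}$, which for $3\delta<\delta_{1}$ beats $\epsilon^{1/2-\delta_1}$ and in particular stays $\le\|w\|$, closing the bootstrap; this yields \eqref{eq: xsbmain}. The main obstacle I anticipate is the bookkeeping of the three small parameters $s-\tfrac12$, $\theta$, $\delta_{1}$ against the probabilistic gain: one needs $s>\tfrac12$ for the $X^{s,b}\hookrightarrow C_tH^s$ embedding (used later in Lemma \ref{lem: errorcontrol}), $s<\theta$ (up to an endpoint) for the $\ell^2$-summability $\sum_n\langle n\rangle^{2s}|c_n|^2<\infty$ that controls the $L^2(\Omega)$-size of the Gaussian chaos, and then the deviation parameter $\lambda$ must be taken so that $\lambda\|u_0\|_{H^s}\lesssim\epsilon^{-1/2}$ while simultaneously $e^{-c\lambda^{2}}\le e^{-\epsilon^{-1-\delta_{2}}}$ — i.e. $\lambda^2\gtrsim\epsilon^{-1-\delta_2}$ yet $\lambda\lesssim\epsilon^{-1/2}$, which is only consistent because the exceptional probability in the statement is the very weak $e^{-\epsilon^{-1-\delta_2}}$ rather than $e^{-C_0\epsilon^{-1}}$; getting these inequalities to all hold at once, together with the requirement $3\delta<\delta_1$ in the bootstrap, is where the argument is delicate, but there is enough room since $\theta>0$ is fixed and $\delta_1$ is allowed to be chosen small depending on $\theta$.
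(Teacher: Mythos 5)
There is a genuine gap, and it sits at the heart of your argument: you try to close a \emph{deterministic} trilinear estimate $\|\NNN_1(v,v,v)\|_{X^{s,b-1}}\lesssim\|v\|_{X^{s,b}}^3$ at regularity $s>\tfrac12$, which forces you to put the free evolution of the data, in particular $e^{i(t-t_0)\Delta}u_{app}(t_0)$, into $X^{s,b}$ with $s>\tfrac12$. But the coefficients of $u_{app}(t_0)$ are (up to phases preserving the Gaussian law) $c_ng_n$ with $c_n=\langle n\rangle^{-\frac12-\theta}$, so almost surely this function lies in $H^\sigma$ only for $\sigma<\theta$; since the lemma must cover every $\theta>0$ (and the paper works with $\theta\ll1$), there is no $s$ with $\tfrac12<s<\theta$, and the Gaussian-chaos $L^2(\Omega)$ quantity you propose to use, $\sum_n\langle n\rangle^{2s}|c_n|^2$, is simply infinite. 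You notice this tension in your parenthetical remark but do not resolve it — and it cannot be resolved by tuning $\lambda$, because the chaos has infinite variance at the regularity you need. This is precisely why the statement is a \emph{random-data} local theory result: the paper's proof invokes the Bourgain / Compaan--Luc\`a--Staffilani machinery in which one decomposes the solution as the rough random linear evolution plus a smoother remainder, keeps the explicit random Fourier coefficients $c_ng_n$ inside the trilinear expressions, and gains the regularity jump to $s>\tfrac12$ from probabilistic (hypercontractivity) multilinear estimates applied to those chaoses, case by case in the number of rough factors. A purely deterministic $X^{s,b}$ contraction around the free flow, as you set it up, cannot produce this smoothing.

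The probability bookkeeping is also off in a way that reflects the same missing mechanism. The exceptional probability $e^{-\epsilon^{-1-\delta_2}}$ in the lemma comes from applying the hypercontractivity bound to \emph{cubic} Gaussian chaoses: the first Picard iterate has deterministic ($L^2(\Omega)$) size of order $\epsilon^2$, so demanding it stay below $\epsilon^{\frac12-\delta_1}$ corresponds to a deviation $\lambda\sim\epsilon^{-\frac32-\delta_1}$, and the order-$3$ exponent $2/k=2/3$ in Lemma \ref{lem: hyper} gives probability $e^{-c\epsilon^{-1-\frac{2\delta_1}{3}}}$ — this is exactly where the slack $\epsilon^{-\delta_1}$ below the critical size $\epsilon^{1/2}$ is spent. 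Your first-order Gaussian bound with $\lambda$ ``slightly above $\epsilon^{-1/2}$'' cannot simultaneously satisfy $\lambda^2\gtrsim\epsilon^{-1-\delta_2}$ and keep $\lambda$ times the (already infinite) norm at size $\epsilon^{-1/2-}$; your own hedging in that sentence is the symptom that the scheme does not close. To repair the proof you would need to reproduce (or cite, as the paper does) the probabilistic trilinear estimates with the rough random linear flow as explicit inputs, rather than a deterministic trilinear estimate at $s>\tfrac12$.
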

\begin{proof}
Lemma \ref{lem: localrandom} lies essentially in \cite{bourgain1994periodic,bourgain1996invariant}, see in particular \cite{compaan2021pointwise}, Proposition 4.6, and Lemma 4.9.  All aforementioned works focus on the regularity issue, i.e. the part $s>\frac{1}{2}$. However, its easy to see the size also matches. Since our initial data is at size $\epsilon^{-1/2}$, the nonlinearity is of cubic form $\epsilon |u|^{2}u$, and \textbf{crucially}, we only need to control error at size $\epsilon^{1/2-\delta_{1}}$ rather than critical scale $\epsilon^{1/2}$. This actually freedom, combines the hypercontractivity estimate, Lemma \ref{lem: hyper}, allows a loss of probability $e^{-\epsilon^{-1-\delta_{2}}}$ rather than $e^{-\epsilon^{-1}}$, the latter would be too large for our application.

\end{proof}

The second one is a normal form type transformation, which is also the key in \cite{garrido2023large}, except that here we needs to combine it with the analysis based on Lemma \ref{lem: localrandom}, via computations of same nature as the $X^{s,b}$ type computations in \cite{bourgain1996invariant}.

\begin{lem}\label{lem: bootstrap}
Let $u,u_{app}, u_{0}, T_{\epsilon}$ be as in Lemma \ref{lem: errorcontrol}. Let $C_{1}\gg C_{0} \gg 1$ and $ C_{2}\gg 1$, $s>\frac{1}{2}$.  Let $0<\delta_{1}\ll 1$. 
Let $F_{\epsilon}=\{\omega|\|u_{0}\|_{2}\leq C_{1}\epsilon^{-1/2}\}$. Then there exists a set $E_{\epsilon}$ with probability measure $e^{-\epsilon^{-1-\delta_{3}}}$, for some $\delta_{3}>0$, such that the following holds for all $\omega\in F_{\epsilon}-E_{\epsilon}$:\\

If there holds bootstrap hypothesis, for all $t\leq T\leq T_{\epsilon}$
 \begin{equation}\label{eq: bh}
 	\|u-u_{app}\|_{C_{t}H^{s}([0,T]\times \mathbb{T})}\leq \epsilon^{-\frac{1}{2}-\delta_{1}},
 \end{equation}
then there holds bootstrap estimate
\begin{equation}\label{eq: be}
	\|u-u_{app}\|_{C_{t}H^{s}([0,T]\times \mathbb{T})}\leq \epsilon^{\frac{1}{2}-C_{2}\delta_{1}}
\end{equation}

 \end{lem}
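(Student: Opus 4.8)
The plan is to run a normal-form (integration by parts in time / Poincaré–Dulac) argument on the equation satisfied by $w := u - u_{app}$, combined with the local $X^{s,b}$ control provided by Lemma \ref{lem: localrandom}. First I would subtract \eqref{eq: uappformula} from \eqref{eq: main} to get the equation for $w$; after conjugating by the gauge factor $e^{2it\epsilon^{2}\frac{1}{2\pi}\|u_{0}\|_{2}^{2}}$ (which is why that gauge appears in Lemma \ref{lem: localrandom}), the right-hand side splits into: (i) the genuinely trilinear resonant-free piece $\epsilon^{2}\NNN_{1}$ evaluated at mixtures of $u_{app}$ and $w$, and (ii) terms that are already of size $\NNN_{2}$-type or contain at least one factor of $w$. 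The terms with at least one $w$ are handled perturbatively: on each unit time interval $[t_{0},t_{0}+1]$ I would estimate them in $X^{s,b-1+}$ using the bootstrap hypothesis \eqref{eq: bh}, the $X^{s,b}$ smoothing estimate, the algebra/multiplication properties of $X^{s,b}$ for $s>\tfrac12$, and the a priori $X^{s,b}$ bound on $u$ from \eqref{eq: xsbmain} (itself $\epsilon^{1/2-\delta_1}$ close to the linear flow of the $H^s$-bounded data $u(t_0)$), picking up only the allowed loss of probability $e^{-\epsilon^{-1-\delta_2}}$. Summing the $\sim T_\epsilon \sim \epsilon^{-1}$ unit intervals and using a Gronwall/continuity argument yields a contribution of size $\epsilon^{-1}\cdot\epsilon^{2}\cdot(\text{polynomial in }\epsilon^{-1/2})$, which with the $C_2\delta_1$ slack lands below $\epsilon^{1/2-C_2\delta_1}$.

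The main work is the purely-$u_{app}$ contribution $\epsilon^{2}\int_{0}^{t} e^{i(t-s)\Delta}\NNN_{1}(u_{app},u_{app},u_{app})(s)\,ds$, which is \emph{not} small by brute force: $u_{app}$ has size $\epsilon^{-1/2}$, so $\epsilon^{2}|u_{app}|^{3}\sim \epsilon^{1/2}$ per unit time and over time $\epsilon^{-1}$ this is far too large. Here I would integrate by parts in $s$, exploiting that on $\NNN_1$ the modulation $n_1^2-n_2^2+n_3^2-(n_1-n_2+n_3)^2 = 2(n_1-n_2)(n_3-n_2)$ is nonzero (the resonant set is exactly excised in \eqref{eq: n1}), so each term carries an oscillatory factor $e^{is\Phi}$ with $\Phi = O((n_1-n_2)(n_3-n_2))$ and dividing by $\Phi$ gains back the needed smoothing. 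This is precisely the $X^{s,b}$-type computation alluded to in the statement: after the normal-form substitution one gets a boundary term (quadrilinear-looking, actually still trilinear in the $g_n$) of size $\epsilon^{2}\cdot\epsilon^{-3/2}=\epsilon^{1/2}$ — again marginal — plus a new time integral whose integrand is now a quintilinear expression (or a trilinear one times $\NNN_2$-type corrections) that is genuinely smaller. The boundary term at $s=t$ minus $s=0$ is where one must be careful, but because we only need the bound $\epsilon^{1/2-C_2\delta_1}$ and the large-deviation/hypercontractivity input (Lemma \ref{lem: hyper}) lets us replace the deterministic $\|g\|_{\ell^\infty}$-type losses by $\epsilon^{-\delta_1/100}$ off a set $E_\epsilon$ of probability $e^{-\epsilon^{-1-\delta_3}}$, the margin closes; the $n$-dependent weights $c_n=\<n\>^{-1/2-\theta}$ with $\theta>0$ are exactly what make the relevant $\ell^2$-in-frequency sums over the modulation-divided symbol converge (this is the point where $\theta\le 0$ would fail).

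Concretely the step order is: (1) derive and gauge-transform the $w$-equation; (2) on a generic unit interval, invoke Lemma \ref{lem: localrandom} to get $X^{s,b}$ control of $u$ (hence of $u_{app}+w$), absorbing probability $e^{-\epsilon^{-1-\delta_2}}$, and define $E_\epsilon$ as the union over the $\lesssim \epsilon^{-1}$ intervals of the bad sets plus the hypercontractivity bad set, still of measure $e^{-\epsilon^{-1-\delta_3}}$ after a union bound; (3) estimate the $w$-containing nonlinear terms perturbatively in $X^{s,b}$ on each interval; (4) run the normal form on the pure $u_{app}$ term, estimating boundary and new bulk terms in $X^{s,b}$ with the hypercontractivity-improved random bounds; (5) sum the $\epsilon^{-1}$ intervals and close via Gronwall, using $F_\epsilon$ to bound $\|u_0\|_2$ and hence all powers of $u_{app}$. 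The principal obstacle is step (4): controlling the normal-form boundary term and verifying that the frequency sums arising from dividing by the modulation $\Phi=2(n_1-n_2)(n_2-n_3)$ — weighted by $\<n\>^s$ and $c_{n_i}$ — converge with a quantitative gain, uniformly over the $\epsilon^{-1}$ pieces; this is exactly where the $X^{s,b}$-flavored bilinear/trilinear estimates of \cite{bourgain1996invariant} and the $\theta>0$ decay are indispensable.
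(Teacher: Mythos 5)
Your overall skeleton (gauge transform, resonant/nonresonant splitting, normal form exploiting $\Omega=2(n_1-n_2)(n_2-n_3)\neq 0$, hypercontractivity off an exceptional set, Gronwall over $[0,T]$) is the right one and matches the paper's reduction to Lemma \ref{lem: normalform}. But there is a genuine gap in how you treat the mixed terms, i.e.\ the trilinear terms containing at least one factor of $w=u-u_{app}$ and at least one factor of $u_{app}$. You propose to estimate these ``perturbatively'' on unit intervals via the $X^{s,b}$ algebra property for $s>\tfrac12$ together with an a priori $X^{s,b}$ bound on $u$ coming from \eqref{eq: xsbmain}, described as being close to ``the linear flow of the $H^s$-bounded data $u(t_0)$''. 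This premise is false: since $c_n=\<n\>^{-1/2-\theta}$ with $\theta\ll 1$, one has $\|u_{app}(t_0)\|_{H^s}=\infty$ almost surely for any $s>\theta$, hence $u(t_0)\notin H^s$ and neither $u$ nor $u_{app}$ admits an $X^{s,b}$ bound at regularity $s>\tfrac12$ of size $\epsilon^{-1/2}$; only the difference does, by \eqref{eq: bh}. Consequently a deterministic product/algebra estimate on $\NNN_{1}(u_{app},u_{app},w)$ in $H^s$ is unavailable, and these terms require either the nonresonant $1/|\Omega|$ gain or a probabilistic trilinear smoothing estimate adapted to $u_{app}$. The paper resolves this by normal-forming the \emph{entire} nonresonant trilinear term in $w$ (not only the pure $u_{app}$ part) and only afterwards splitting each $w_k$ into its Gaussian piece and the $H^s$ remainder $h_k$; the mixed cases (Cases 2--7) are then closed by Cauchy--Schwarz, the pointwise random bound \eqref{eq: gepsic}, and the deterministic $1/|\Omega|$-weighted convolution sums \eqref{eq: key1}--\eqref{eq: key2}, with only the fully Gaussian case needing hypercontractivity. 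Relatedly, your size bookkeeping does not close as stated: a term with exactly one $w$ factor has size $\epsilon^{2}\cdot\epsilon^{-1}\cdot\epsilon^{-1/2-\delta_1}=\epsilon^{1/2-\delta_1}$ per unit time, so summing $\sim\epsilon^{-1}$ unit intervals yields $\epsilon^{-1/2-\delta_1}$, far above the target $\epsilon^{1/2-C_2\delta_1}$; these contributions must enter linearly in $\|w\|_{H^s}$ as a Gronwall coefficient of size $O(\epsilon)$ per unit time (as in \eqref{eq: normalformestimate}), which your ``sum the intervals'' accounting does not capture.

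A second, smaller gap: the conclusion \eqref{eq: be} is a $C_tH^s$ bound over the whole interval $[0,T]$ with $T\lesssim\epsilon^{-1}$, so both the normal-form boundary term at time $t$ and the bulk integrand at each $\tau$ require control of random multilinear sums \emph{uniformly in continuous time}, whereas hypercontractivity (Lemma \ref{lem: hyper}) only gives fixed-time bounds. The paper handles this with a quantitative discretization: a time grid of spacing $\tilde N_1^{-10}$, a crude deterministic bound on $\partial_\tau$ of the random sum, and a union bound over grid points and dyadic blocks (the same device used for the linear LDP). Your proposal gestures at hypercontractivity off a set of measure $e^{-\epsilon^{-1-\delta_3}}$ but does not address this sup-in-time issue, which is one of the genuinely technical points of the proof.
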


We will present the proof of Lemma \ref{lem: errorcontrol} assuming Lemma \ref{lem: localrandom} and Lemma \ref{lem: bootstrap} in Section \ref{sec: errorcontrol}.

\section{LDP for the linear flow and modified linear flow}\label{sec: ldplinear}
We prove Lemma \ref{lem: ldpforlinearflow} and Lemma \ref{lem: modlinearldp} in this section. The proof are of same nature. We start with Lemma \ref{lem: ldpforlinearflow}, which is slightly easier.

\subsection{Proof of Lemma \ref{lem: ldpforlinearflow}}
We first start with the following classical pointwise Large Deviations Principle for $e^{it\Delta}u_0(x)$.

Recall $u_{0}=\sum_{n}\frac{g_{n}}{\<n\>^{\frac{1}{2}+\theta}}e^{inx}$.

    \begin{lem}\label{lem: pointwiseldp}
        Fix $t,x$, then 
        \begin{align}\label{eq: pointwiseldp}
            \mathbb{P}(|e^{it\Delta}u_0(x)| > z_0\epsilon ^{-\frac{1}{2}})
		    = e^{-\frac{z_0^2\epsilon ^{-1}}{\sum \limits_{n\in \mathbb{Z}} |c_n|^2}}.
        \end{align}
    \end{lem}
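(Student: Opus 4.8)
The plan is to recognize that, for each fixed $(t,x)$, the random variable $e^{it\Delta}u_{0}(x)$ is a mean-zero complex Gaussian, and then to read off its tail directly from its variance, which gives the stated identity exactly (not merely asymptotically).

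First I would write $e^{it\Delta}u_{0}(x)=\sum_{n}c_{n}g_{n}e^{i(nx-tn^{2})}$, interpreted as the $L^{2}(\Upsilon)$-limit (equivalently, the almost sure limit) of the partial sums $\sum_{|n|\le N}c_{n}g_{n}e^{i(nx-tn^{2})}$; this limit exists because $c_{n}=\langle n\rangle^{-(\frac{1}{2}+\theta)}$ with $\theta>0$ forces $\sum_{n}|c_{n}|^{2}<\infty$. Since $\{g_{n}\}$ are i.i.d.\ standard complex Gaussians and the standard complex Gaussian law is invariant under multiplication by any unimodular constant, each term $c_{n}g_{n}e^{i(nx-tn^{2})}$ has the same distribution as $c_{n}g_{n}$, and the terms remain jointly independent. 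Hence $Z:=e^{it\Delta}u_{0}(x)$ is a mean-zero complex Gaussian with $\mathbb{E}|Z|^{2}=\sum_{n}|c_{n}|^{2}=:\sigma^{2}$.

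Next I would invoke the elementary fact that for a mean-zero complex Gaussian $Z$ with $\mathbb{E}|Z|^{2}=\sigma^{2}$, the quantity $|Z|^{2}/\sigma^{2}$ is exponentially distributed with parameter $1$, so that $\mathbb{P}(|Z|>a)=e^{-a^{2}/\sigma^{2}}$ for every $a>0$. (This is exactly the normalization under which $g_{n}$ standard complex Gaussian means $\mathbb{P}(|g_{n}|>s)=e^{-s^{2}}$.) Taking $a=z_{0}\epsilon^{-1/2}$ yields
\begin{equation*}
\mathbb{P}\bigl(|e^{it\Delta}u_{0}(x)|>z_{0}\epsilon^{-1/2}\bigr)=e^{-z_{0}^{2}\epsilon^{-1}/\sigma^{2}}=e^{-\frac{z_{0}^{2}\epsilon^{-1}}{\sum_{n}|c_{n}|^{2}}},
\end{equation*}
which is \eqref{eq: pointwiseldp}.

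There is essentially no obstacle in this lemma: the only two points needing (minor) care are the rotational invariance of the complex Gaussian, used to absorb the oscillatory phases $e^{i(nx-tn^{2})}$ without changing any distributions, and the convergence of the infinite random series defining $e^{it\Delta}u_{0}(x)$ pointwise in $x$; both are consequences of $\sum_{n}|c_{n}|^{2}<\infty$, i.e.\ of the hypothesis $\theta>0$. The genuinely substantive work of the paper is upgrading this pointwise identity to the $L^{\infty}_{t,x}$ statement of Lemma \ref{lem: ldpforlinearflow} and Lemma \ref{lem: modlinearldp}, not the present computation.
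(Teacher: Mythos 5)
Your proposal is correct and follows essentially the same route as the paper: identify $e^{it\Delta}u_{0}(x)$ as a mean-zero complex Gaussian with variance $\sum_{n}|c_{n}|^{2}$ (using rotational invariance of the $g_{n}$ to absorb the phases) and read off the Rayleigh/exponential tail $\mathbb{P}(|Z|>a)=e^{-a^{2}/\sigma^{2}}$. The extra details you supply (series convergence and the normalization convention making the identity exact) are consistent with the paper's brief argument.
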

    \begin{proof}
        Fix $t,x$, it is well-known that $e^{it\Delta}u_0(x)$ is still a Guassian with mean $0$ and variance $\sum \limits_{n\in \mathbb{Z}} |c_n|^2$. Therefore, $e^{it\Delta}u_0(x)$ follows a Rayleigh distibution and \eqref{eq: pointwiseldp} holds, see also \cite{garrido2023large}.
    \end{proof}
    Now we apply Lemma \ref{lem: pointwiseldp} to obtain Lemma \ref{lem: ldpforlinearflow}.
    We fix $z_0,\epsilon >0$ throughout.

     Applying \eqref{eq: pointwiseldp}, we have the desired lower bound
    \begin{align*}
        \mathbb{P}(\sup_{t\in [0,T]}\sup_{x\in \mathbb{T}} |e^{it\Delta}u_0(x)|>z_0 \epsilon^{-\frac{1}{2}})
		\geq e^{-\frac{z_0^2 \epsilon ^{-1} }{\sum \limits_{n\in \mathbb{Z}} |c_n|^2}}.
    \end{align*}

    Next we turn to the upper bound. Those types of computations originates from \cite{bourgain1996invariant}, and play an important role in \cite{compaan2021pointwise}, in particular proof of Prop 4.1, see also \cite{fan20212d}.\\

    Let us fix dyadic $N\sim \epsilon^{-\frac{1000}{\theta}}$ large.\\
    
    We note that up to probability $e^{-N^{\theta/100}}$, we have
    \begin{equation}\label{eq: tamegaussian}
    |g_{n}|\leq \<n\>^{\frac{\theta}{100}}, |n|\geq N.	
    \end{equation}
   and 
   \begin{equation}\label{eq: tameguassian2}
   |g_{n}|\leq N, |n|\leq N
   \end{equation}

    We claim
    
    \begin{lem}\label{lem: uniformhigh}
    Take dyadic $N\sim \epsilon^{-\frac{1000}{\theta}}$ large, up to probability $e^{-N^{\theta/200}}\ll e^{-\epsilon^{-2}}$,  one has for all dyadic $M\geq N$
    \begin{equation}\label{eq: uniformhigh} 
    \sup_{t\in [0,T_{\epsilon}]}\|P_{M}e^{it\Delta}u_{0}\|_{L^{\infty}}\lesssim M^{-\theta/100}
    \end{equation}
    and in particular
    \begin{equation}\label{eq: uniformhighwork}
    \sup_{t\in [0,T_{\epsilon}]}\|P_{\geq N}e^{it\Delta}u_{0}\|_{L^{\infty}}\lesssim N^{-\theta/100}.
    \end{equation}

    \end{lem}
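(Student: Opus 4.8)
The plan is to establish the uniform-in-time high-frequency decay \eqref{eq: uniformhigh} by combining the crude pointwise bound on $P_M e^{it\Delta}u_0$ coming from the tamed Gaussian event \eqref{eq: tamegaussian}--\eqref{eq: tameguassian2} with a union bound over a sufficiently fine net in $[0,T_\epsilon]\times\mathbb{T}$, together with a Bernstein/derivative estimate to pass from the net back to the supremum. First I would fix the tamed event: on the complement of a set of probability $\lesssim e^{-N^{\theta/100}}$ we have $|g_n|\le \<n\>^{\theta/100}$ for $|n|\ge N$ and $|g_n|\le N$ for $|n|\le N$; all subsequent estimates are deterministic on this event. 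For a single dyadic block $M\ge N$, $P_M e^{it\Delta}u_0(x)=\sum_{n\sim M} c_n g_n e^{inx-itn^2}$ has at most $\sim M$ terms, each of size $\lesssim \<n\>^{-(1/2+\theta)}\<n\>^{\theta/100}\lesssim M^{-1/2-\theta+\theta/100}$, so the triangle inequality gives the very crude bound $\|P_M e^{it\Delta}u_0\|_{L^\infty}\lesssim M^{1/2-\theta+\theta/100}$, which is far from the target $M^{-\theta/100}$ but is uniform in $t$. The point is not to use the triangle inequality but to gain the extra power of $M$ from oscillation/orthogonality, which only holds generically and hence requires a probabilistic input at each scale.

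The substantive step is therefore to prove, for each fixed dyadic $M\ge N$, that with probability at least $1-e^{-M^{\theta/150}}$ (say) one has $\sup_{t\in[0,T_\epsilon]}\|P_M e^{it\Delta}u_0\|_{L^\infty}\lesssim M^{-\theta/100}$, and then sum the exceptional probabilities over dyadic $M\ge N$ — since $\sum_{M\ge N}e^{-M^{\theta/150}}\lesssim e^{-N^{\theta/200}}\ll e^{-\epsilon^{-2}}$ for $N\sim\epsilon^{-1000/\theta}$, this yields the claimed simultaneous bound over all $M\ge N$, and \eqref{eq: uniformhighwork} follows by summing the geometric series $\sum_{M\ge N}M^{-\theta/100}\lesssim N^{-\theta/100}$. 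For the fixed-$M$ statement I would (i) fix a point $(t_k,x_j)$ in a net of $[0,T_\epsilon]\times\mathbb{T}$ of spacing $\sim M^{-10}$ (so there are $\lesssim T_\epsilon M^{20}\lesssim \epsilon^{-1}M^{20}$ net points), use that $P_M e^{it_k\Delta}u_0(x_j)$ is a complex Gaussian of variance $\sum_{n\sim M}|c_n|^2\lesssim M^{-2\theta}$, and apply the Gaussian tail (equivalently Lemma~\ref{lem: hyper} with $k=1$) to get $\mathbb{P}(|P_M e^{it_k\Delta}u_0(x_j)|>M^{-\theta/50})\le e^{-c M^{2\theta-\theta/25}}=e^{-cM^{\theta(2-1/25)}}$; then (ii) a union bound over the net costs a factor $\epsilon^{-1}M^{20}\le M^{C}$, which is absorbed into the exponential since $M^{\theta(2-1/25)}$ dominates $\log(M^C)$ for $M\ge N$ large; finally (iii) pass from the net to the full supremum: since $\partial_t P_M e^{it\Delta}u_0$ and $\partial_x P_M e^{it\Delta}u_0$ are again Fourier-supported in $|n|\sim M$ with one extra factor $n^2$ resp.\ $n$, Bernstein plus the same crude tamed bound gives $\|\partial_{t,x}P_M e^{it\Delta}u_0\|_{L^\infty_{t,x}}\lesssim M^{10}$ on the tamed event, so moving from any $(t,x)$ to the nearest net point changes the value by at most $M^{10}\cdot M^{-10}\lesssim 1$, which after adjusting net spacing to $M^{-20}$ is $\lesssim M^{-\theta/100}$. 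This yields the uniform bound for that $M$.

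The main obstacle is bookkeeping the three competing scales — the net spacing, the number of net points ($\sim \epsilon^{-1}$ times a power of $M$), and the Gaussian deviation threshold $M^{-\theta/100}$ — so that the union-bound loss is genuinely beaten by the Gaussian tail for every $M\ge N$ simultaneously; this is exactly why $N$ must be taken polynomially large in $\epsilon^{-1}$ with the large exponent $1000/\theta$, ensuring that even the $t$-net factor $T_\epsilon\sim\epsilon^{-1}$ is negligible against $e^{cN^{2\theta-\theta/25}}$. A secondary technical point is choosing the deviation exponent in the Gaussian tail ($M^{-\theta/50}$ at a net point, degrading to $M^{-\theta/100}$ after the net-to-sup step) so there is enough room both for the net error and for the geometric summation over $M$; none of this is deep, but it must be arranged consistently. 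The remark $e^{-N^{\theta/200}}\ll e^{-\epsilon^{-2}}$ in the statement is then immediate from $N\sim\epsilon^{-1000/\theta}$, since $N^{\theta/200}\sim \epsilon^{-5}\gg\epsilon^{-2}$.
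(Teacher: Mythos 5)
Your proposal is correct and follows essentially the same route as the paper: for each dyadic block $M\geq N$ you combine pointwise Gaussian tail bounds (variance $\sim M^{-2\theta}$) on a polynomially fine space--time net with a union bound and a crude derivative estimate for $\nabla_{t,x}P_M e^{it\Delta}u_0$ to pass from the net to the supremum, then sum the exceptional probabilities and the bounds over dyadic $M\geq N$. The only cosmetic difference is that you derive the derivative control from the tamed event \eqref{eq: tamegaussian}--\eqref{eq: tameguassian2} via Bernstein, whereas the paper invokes a probabilistic $H^{10}$ bound $\|P_M u_0\|_{H^{10}}\leq M^{100}$ with probability $1-e^{-M}$; both give the same polynomial-in-$M$ gradient bound.
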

    
    Furthermore, we claim
    \begin{lem}\label{lem: lowfrequency}
    Under the same assumptions of Lemma \ref{lem: uniformhigh}, up to probability $e^{-N}\ll e^{-\epsilon^{-2}}$, we have 
    \begin{equation}\label{eq: lowhs}
    \|P_{\leq N}u_{0}\|_{H^{10}}\lesssim  N^{100}.	
    \end{equation}
    and in particular 
    \begin{equation}\label{eq: derivative}
    \|\nabla_{t,x}P_{\leq N}e^{it\Delta}u_{0}\|_{L_{t}^{\infty}L^{\infty}_{x}}\lesssim N^{100}	
    \end{equation}

    \end{lem}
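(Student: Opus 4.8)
The statement to prove is Lemma \ref{lem: lowfrequency}: under the tame-Gaussian hypotheses (in particular \eqref{eq: tameguassian2}), up to probability $e^{-N}$ one has $\|P_{\leq N}u_0\|_{H^{10}}\lesssim N^{100}$, hence $\|\nabla_{t,x}P_{\leq N}e^{it\Delta}u_0\|_{L^\infty_t L^\infty_x}\lesssim N^{100}$.

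\textbf{Proof proposal.}

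The plan is to get everything directly from the bound $|g_n|\le N$ for $|n|\le N$ from \eqref{eq: tameguassian2}, which holds outside a set of probability at most $\sum_{|n|\le N}\mathbb{P}(|g_n|>N)\lesssim N e^{-cN^2}\ll e^{-N}$ (complex Gaussian tails); alternatively one absorbs this into the event already fixed in \eqref{eq: tameguassian2}. First I would estimate the low-frequency Sobolev norm crudely: since $c_n=\langle n\rangle^{-1/2-\theta}\le 1$ and $\langle n\rangle^{10}\lesssim N^{10}$ for $|n|\le N$, on the tame event
\begin{equation}
\|P_{\le N}u_0\|_{H^{10}}^2=\sum_{|n|\le N}\langle n\rangle^{20}c_n^2|g_n|^2\lesssim N^{20}\cdot N^2\cdot N = N^{23}\ll N^{200},
\end{equation}
so \eqref{eq: lowhs} holds with huge room to spare (the exponent $100$ is not sharp and any fixed large power works; I keep $N^{100}$ as stated). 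This is the content of \eqref{eq: lowhs}.

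Next, for \eqref{eq: derivative}, I would note that $e^{it\Delta}$ is a unitary group that commutes with $P_{\le N}$, and that on the Fourier side $\widehat{P_{\le N}e^{it\Delta}u_0}(n)=\mathbbm{1}_{|n|\le N}\,c_n g_n e^{-itn^2}$. Hence $\partial_x P_{\le N}e^{it\Delta}u_0$ has Fourier coefficient $in\,\mathbbm{1}_{|n|\le N}c_n g_n e^{-itn^2}$ and $\partial_t P_{\le N}e^{it\Delta}u_0 = i\Delta P_{\le N}e^{it\Delta}u_0$ has Fourier coefficient $-in^2\,\mathbbm{1}_{|n|\le N}c_n g_n e^{-itn^2}$. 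In either case, taking absolute values pointwise in $(t,x)$ and summing the finitely many ($\le 2N+1$) terms,
\begin{equation}
\|\nabla_{t,x}P_{\le N}e^{it\Delta}u_0\|_{L^\infty_{t,x}}\le \sum_{|n|\le N}(|n|+n^2)c_n|g_n|\lesssim N^2\cdot 1\cdot N\cdot N = N^4\ll N^{100},
\end{equation}
again using $c_n\le 1$ and $|g_n|\le N$ on the tame event. This gives \eqref{eq: derivative}. (One could equally route this through \eqref{eq: lowhs} and the Sobolev embedding $H^2(\mathbb{T})\hookrightarrow L^\infty$, together with $\|\nabla_{t,x}P_{\le N}e^{it\Delta}u_0\|_{C_t H^8}\lesssim \|P_{\le N}u_0\|_{H^{10}}$ from unitarity and $\partial_t=i\Delta$; either way the loss of derivatives is harmless since everything is frequency-localized to $|n|\le N$.)

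There is essentially no obstacle here: the lemma is a soft ``everything low-frequency is polynomially bounded on the tame event'' statement, and the only thing to be slightly careful about is bookkeeping the probability, namely that $\mathbb{P}(\exists\,|n|\le N:\ |g_n|>N)\le e^{-N}$ for $N$ large, which follows from the Gaussian tail $\mathbb{P}(|g_n|>N)\le e^{-N^2/2}$ and a union bound over $O(N)$ indices. The generous powers of $N$ in the statement ($N^{100}$, $N^{100}$) leave ample slack, so no sharp constant tracking is needed, and the whole argument is just the triangle inequality on a finite Fourier sum.
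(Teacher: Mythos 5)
Your proposal is correct and follows essentially the same route as the paper's (one-line) argument: the tame bound $|g_n|\le N$ for $|n|\le N$ from \eqref{eq: tameguassian2} gives \eqref{eq: lowhs} by direct summation, and \eqref{eq: derivative} follows using $\partial_t e^{it\Delta}u_0=i\Delta e^{it\Delta}u_0$ together with either the finite Fourier sum or Sobolev embedding. Your explicit probability bookkeeping (union bound over $O(N)$ Gaussian tails, giving $\ll e^{-N}$) is a harmless elaboration of what the paper leaves implicit.
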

    
    We first prove Lemma \ref{lem: ldpforlinearflow} assuming Lemma \ref{lem: uniformhigh}, Lemma \ref{lem: lowfrequency}. 
    \begin{proof}
    We short $e^{it\Delta}u_{0}$ via $u_{lin}$
    First observe, since $N\sim \epsilon^{-1000/\theta}$. For any fixed points $(x_{1},t_{1}),...,(x_{L},t_{L})$, with $L\lesssim N^{1000}$, one has (since polynomial always dominated by exponential), thanks to Lemma \ref{lem: pointwiseldp},
    \begin{equation}\label{eq: multildpposintwise}
  -\ln \mathbb{P}\left(\sup_{i=1,...,L}|u_{lin}(t_{i},x_{i})|>z_{0}\epsilon^{-1/2}\right)=\frac{z_{0}^{2}\epsilon^{-1}}{\sum_{n}|c_{n}|^{2}}+o(\epsilon^{-1})
    \end{equation}

    Secondly, by Lemma \ref{lem: uniformhigh}, (since $N^{-\theta/100}\sim \epsilon^{10}\ll \epsilon^{-1/2}$),  we have
    \begin{equation}
    -\ln \mathbb{P}\left(\sup_{i=1,...,L}|P_{\leq N}u_{lin}(t_{i},x_{i})|>z_{0}\epsilon^{-1/2}\right)=\frac{z_{0}^{2}\epsilon^{-1}}{\sum_{n}|c_{n}|^{2}}+o(\epsilon^{-1})
    \end{equation}

Now, we pose $(x_{1},t_{1}), ...,(x_{L},t_{L})$ even on $[0, T_{\epsilon}]\times \mathbb{T}$, thus for any $t\in [0, T_{\epsilon}]$, one can find (essentially unique)$|(t_{i},x_{i})-(t,x)|\lesssim N^{-500}$, thus by \eqref{eq: derivative}, we have the desired 
\begin{equation}\label{eq: finalldplinear}
	-\ln \mathbb{P}\left(\sup_{t}\|u_{lin}(t,x)\|_{L_{x}^{\infty}}>z_{0}\epsilon^{-1/2}\right)=\frac{z_{0}^{2}\epsilon^{-1}}{\sum_{n}|c_{n}|^{2}}+o(\epsilon^{-1})
\end{equation}

    \end{proof}

We are left with the proof of Lemma \ref{lem: uniformhigh} and Lemma \ref{lem: lowfrequency}.

We first prove Lemma \ref{lem: lowfrequency}. It is almost direct since \eqref{eq: tameguassian2} implies \eqref{eq: lowhs}, and by Sobolev embedding and the equation of linear Schrodinger, we obtain \eqref{eq: derivative}.

We now prove Lemma \ref{lem: uniformhigh}.

It is enough to prove for all $M\geq N$, one has up to probability $e^{-M^{\theta/150}}$, there holds \eqref{eq: uniformhigh}.

This part is similar to our previous proof of Lemma \ref{lem: ldpforlinearflow} assuming Lemma \ref{lem: uniformhigh}, Lemma \ref{lem: lowfrequency}. \\

Note that for any  $(x_{1},t_{1}),...,(x_{L},t_{L})$, with $L\lesssim M^{1000}$, we have
\begin{equation}
\mathbb{P}(\sup_{l}|P_{M}u_{lin}(x_{l},t_{l})|>M^{-\theta/100})\lesssim e^{-M^{\theta/100}}	
\end{equation}

Furthermore, we have, up to probability $e^{-M}$
\begin{equation}
\|P_{M}u_{0}\|_{H^{10}}\leq M^{100}	
\end{equation}
and thus 
\begin{equation}
\|\nabla_{t,x}e^{it\Delta}P_{M}u_{0}\|_{L_{t,x}^{\infty}}\lesssim M^{100}.
\end{equation}

Pose $(x_{1},t_{1}),...,(x_{L},t_{L})$ evenly on $[0,T_{\epsilon}]\times \mathbb{T}$, and the desired estimates follows as we do for \eqref{eq: finalldplinear}.

\subsection{Proof of Lemma \ref{lem: modlinearldp}}
Lemma \ref{lem: modlinearldp} follows almost same as the argument in previous section.

It suffices to show \eqref{eq: modlinearldp} for $b(t,x) := e^{2it\epsilon^{2}\frac{1}{2\pi}\|u_{0}\|_{2}^{2}}u_{app}=\sum_{k}c_k g_k e^{e^{ikx+it\epsilon^{2}c_k^2 |g_k|^2-it k^2}}$. Fix $t$, noting that $\{g_k e^{it\epsilon ^2c_k^2|g_k|^2}\}_{k\in \mathbb{Z}}$ are still i.i.d complex Guassian normal random variables (see \cite[Lemma 4.2]{garrido2023large}), thus Lemma \ref{lem: pointwiseldp} remains valid. \\

The Sobolev norm control in space is same as the free linear flow. The time derivative control is slightly different. But observe, in frequency $k$, the time derivative for the free flow gives an extra $k^{2}$,  ($\partial_{t}(e^{-itk^{2}})e^{ikx}=-ik^{2}\partial_{t}(e^{-itk^{2}})e^{ikx}$). But for the modified linear flow, the time derivative at frequency $k$ gives an extra $\epsilon^{2}c_{k}^{2}|g_{k}|^{2}$,  noting that we are doing ldp for $g_{k}$ at level of $\epsilon^{-1/2}$, so this part is neglectable  compared to $k^{2}$, and previous arguments works line by line same.

\section{Error control}\label{sec: errorcontrol}

We first prove Lemma \ref{lem: errorcontrol} assuming Lemma \ref{lem: localrandom} and Lemma \ref{lem: bootstrap}.

To start, we first observe Lemma \ref{lem: localrandom} implies the following (one may need to slightly change $\delta_{2}$),

\begin{lem}\label{lem: localrandom2}
Let $u, u_{app}, T_{\epsilon}$ be as in Lemma \ref{lem: errorcontrol}. For all (fixed)  $\theta>0$. There exists some $s>\frac{1}{2}$, so that the following holds.

Assume that for some $t_{0}\leq T_{\epsilon}$, one has 
\begin{equation}\label{eq: close2}
\|u(t_{0})-u_{app}(t_{0})\|_{H^{s}_{x}}\leq \epsilon^{-\frac{1}{2}},	
\end{equation}
then one has up to (extra) probability $e^{-\epsilon^{-1-\delta_{2}}}$, for some $\delta_{2}>0$,
\begin{equation}\label{eq: hsmain}
\|u(t)-u_{app}(t)\|_{L_{t}^{\infty}H^{s}[t_{0}, t_{0}+1]}\lesssim \epsilon^{-\frac{1}{2}}
\end{equation}

\end{lem}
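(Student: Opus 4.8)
\textbf{Proof plan for Lemma \ref{lem: localrandom2}.}

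The plan is to obtain \eqref{eq: hsmain} by transferring the $X^{s,b}$ bound \eqref{eq: xsbmain} of Lemma \ref{lem: localrandom} back to the physical quantity $u-u_{app}$ on the unit time slab $[t_0,t_0+1]$. Write $v(t):=e^{2it\epsilon^2\frac{1}{2\pi}\|u_{0}\|_{2}^{2}}u(t)$ and $w(t):=e^{2it_{0}\epsilon^2\frac{1}{2\pi}\|u_{0}\|_{2}^{2}}e^{i(t-t_{0})\Delta}u(t_{0})$, so that \eqref{eq: xsbmain} reads $\|v-w\|_{X^{s,b}[t_0,t_0+1]}\leq\epsilon^{1/2-\delta_1}$ up to probability $e^{-\epsilon^{-1-\delta_2}}$. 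Since the modulation factor $e^{2it\epsilon^2\frac{1}{2\pi}\|u_0\|_2^2}$ is a spatially constant unimodular phase (on the set $F_\epsilon$ where $\|u_0\|_2\lesssim\epsilon^{-1/2}$, so the phase is controlled), multiplying or dividing by it changes the $X^{s,b}$ norm only by an $O(1)$ factor coming from a harmless shift $\tau\mapsto\tau+2\epsilon^2\frac{1}{2\pi}\|u_0\|_2^2$, which moves $\langle\tau+n^2\rangle^{2b}$ by at most a constant since $\epsilon^2\|u_0\|_2^2=O(1)$; hence $\|u-e^{i(t-t_0)\Delta}u(t_0)\|_{X^{s,b}[t_0,t_0+1]}\lesssim\epsilon^{1/2-\delta_1}$ with the same probability.

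Next I would compare the two free-evolution-type objects. By the embedding $X^{s,b}[t_0,t_0+1]\subset C_tH^s([t_0,t_0+1]\times\mathbb T)$ for $b>\frac12$ stated in the preliminaries, the last display yields $\|u(t)-e^{i(t-t_0)\Delta}u(t_0)\|_{L_t^\infty H^s[t_0,t_0+1]}\lesssim\epsilon^{1/2-\delta_1}$. It remains to control $e^{i(t-t_0)\Delta}u(t_0)-u_{app}(t)$. Using the hypothesis \eqref{eq: close2} and unitarity of $e^{i(t-t_0)\Delta}$ on $H^s$, we get $\|e^{i(t-t_0)\Delta}u(t_0)-e^{i(t-t_0)\Delta}u_{app}(t_0)\|_{L_t^\infty H^s}\leq\|u(t_0)-u_{app}(t_0)\|_{H^s}\leq\epsilon^{-1/2}$. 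Finally I would estimate $e^{i(t-t_0)\Delta}u_{app}(t_0)-u_{app}(t)$: since $u_{app}$ solves \eqref{eq: uappformula}, Duhamel's formula gives this difference as $\int_{t_0}^t e^{i(t-s)\Delta}\big(-i\epsilon^2\NNN_2(u_{app})(s)+2i\epsilon^2\frac{1}{2\pi}\|u_0\|_2^2 u_{app}(s)\big)ds$ — but actually it is cleaner to bound the $H^s$ norm of $u_{app}(t)$ itself directly from its explicit Fourier representation \eqref{eq: uapp}: $\|u_{app}(t)\|_{H^s}^2=\sum_n\langle n\rangle^{2s}c_n^2|g_n|^2$, which on $F_\epsilon$ (combined with standard Gaussian tail bounds, absorbable into the exceptional set $E_\epsilon$ of probability $\leq e^{-\epsilon^{-1-\delta_2}}$) is $\lesssim\epsilon^{-1}$ once $s$ is close enough to $\frac12$ that $\langle n\rangle^{2s}c_n^2=\langle n\rangle^{2s-1-2\theta}$ is summable, i.e. $s<\frac12+\theta$; note the constraint $\frac12<s<\frac12+\theta$ is exactly compatible with Lemma \ref{lem: localrandom}. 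Hence $\|e^{i(t-t_0)\Delta}u_{app}(t_0)\|_{L_t^\infty H^s}=\|u_{app}(t_0)\|_{H^s}\lesssim\epsilon^{-1/2}$ as well. Summing the three contributions $\epsilon^{1/2-\delta_1}+\epsilon^{-1/2}+\epsilon^{-1/2}\lesssim\epsilon^{-1/2}$ gives \eqref{eq: hsmain}, after possibly enlarging the exceptional set and slightly shrinking $\delta_2$ as the statement allows.

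The only mildly delicate point — and the place I would be most careful — is the bookkeeping of exceptional sets and the role of $F_\epsilon$: the modulation-phase comparison and the bound $\|u_{app}(t_0)\|_{H^s}\lesssim\epsilon^{-1/2}$ both secretly use $\|u_0\|_2\lesssim\epsilon^{-1/2}$, so strictly one should either restrict to $F_\epsilon$ (whose complement has probability $\leq e^{-c\epsilon^{-1}}$, which on the scale $e^{-\epsilon^{-1-\delta_2}}$ is actually \emph{not} negligible and so must be folded in by choosing to state the conclusion conditionally on $F_\epsilon$, exactly as Lemma \ref{lem: bootstrap} does) or replace $\|u_0\|_2$ by its almost-sure value $\sum_n c_n^2$ up to a Gaussian deviation. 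Since Lemma \ref{lem: localrandom2} is only invoked downstream in tandem with Lemma \ref{lem: bootstrap} on $F_\epsilon\setminus E_\epsilon$, this causes no real loss; everything else is a routine consequence of the $X^{s,b}$ smoothing black box and the embedding into $C_tH^s$. No genuinely hard estimate is hidden here — the substantive work is all in Lemma \ref{lem: localrandom}, which we are permitted to assume.
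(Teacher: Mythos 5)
There is a genuine gap, and it sits exactly at the point where the lemma has real content. Your third step bounds $e^{i(t-t_{0})\Delta}u_{app}(t_{0})-u_{app}(t)$ by the triangle inequality through $\sup_{t}\|u_{app}(t)\|_{H^{s}}$, claiming this is $\lesssim\epsilon^{-1/2}$ once $s<\tfrac12+\theta$. But $\|u_{app}(t)\|_{H^{s}}^{2}=\sum_{n}\langle n\rangle^{2s}c_{n}^{2}|g_{n}|^{2}$ has summands $\langle n\rangle^{2s-1-2\theta}|g_{n}|^{2}$, and $\sum_{n}\langle n\rangle^{2s-1-2\theta}$ converges only when $s<\theta$ (your condition $s<\tfrac12+\theta$ only makes the exponent negative, not summable over $\mathbb{Z}$). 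Since the paper works with $\theta\ll1$ and needs $s>\tfrac12$, the data and hence $u$, $u_{app}$, $e^{it\Delta}u_{0}$ all lie outside $H^{1/2}$ almost surely, so $\|u_{app}(t)\|_{H^{s}}=\infty$ a.s.\ and your bound is vacuous. The entire point of Lemma \ref{lem: localrandom2} is that the \emph{difference} $u-u_{app}$ is smoother than either term, so no argument routed through $\|u_{app}\|_{H^{s}}$ can close. The paper instead exploits the explicit formula \eqref{eq: uapp}: after gauging, the $n$-th Fourier coefficient of the relevant difference is $c_{n}g_{n}e^{-itn^{2}}\bigl(e^{it_{0}\epsilon^{2}c_{n}^{2}|g_{n}|^{2}}-e^{it\epsilon^{2}c_{n}^{2}|g_{n}|^{2}}\bigr)$, and the mean value theorem gains the factor $|t-t_{0}|\,\epsilon^{2}c_{n}^{2}|g_{n}|^{2}$, which both restores summability ($\langle n\rangle^{2s-3-6\theta}|g_{n}|^{6}$) and gives smallness $\epsilon^{2}$, handled off a tiny exceptional set via Lemma \ref{lem: hyper}. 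This cancellation is the step your proposal omits, and it cannot be replaced by a crude bound.

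The same roughness issue also invalidates your first step. From \eqref{eq: xsbmain} you cannot deduce $\|u-e^{i(t-t_{0})\Delta}u(t_{0})\|_{X^{s,b}[t_{0},t_{0}+1]}\lesssim\epsilon^{1/2-\delta_{1}}$: the two terms in \eqref{eq: xsbmain} carry \emph{different} phases (the time-dependent $e^{2it\epsilon^{2}\frac{1}{2\pi}\|u_{0}\|_{2}^{2}}$ on $u$ versus the constant $e^{2it_{0}\epsilon^{2}\frac{1}{2\pi}\|u_{0}\|_{2}^{2}}$ on the free evolution), so stripping them produces the extra term $\bigl(e^{-2it\epsilon^{2}\frac{1}{2\pi}\|u_{0}\|_{2}^{2}}-e^{-2it_{0}\epsilon^{2}\frac{1}{2\pi}\|u_{0}\|_{2}^{2}}\bigr)e^{i(t-t_{0})\Delta}u(t_{0})$, which is $O(|t-t_{0}|)$ times a free evolution of data not in $H^{s}$, hence again of infinite $H^{s}$ norm. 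This is why the paper keeps the gauge on every term of the decomposition \eqref{eq: workdeco} and only removes it at the very end, as a constant unimodular factor multiplying the whole difference $u-u_{app}$; for the same reason, the Duhamel variant you mention would founder on the linear term $2\epsilon^{2}\frac{1}{2\pi}\|u_{0}\|_{2}^{2}u_{app}$, whose $H^{s}$ norm is infinite. Your remarks about the exceptional-set bookkeeping and the role of $F_{\epsilon}$ are sensible but secondary; the proof as proposed does not establish \eqref{eq: hsmain}.
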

\begin{proof}[Proof of Lemma \ref{lem: localrandom2}]
Recall $X^{s,b}$ embeds into $L_{t}^{\infty}H^{s}$, estimate \eqref{eq: xsbmain} implies 
\begin{equation}\label{eq: hswork}
\|e^{2it\epsilon^{2}\frac{1}{2\pi}\|u_{0}\|_{2}^{2}}u(t)-e^{2it_{0}\epsilon^{2}\frac{1}{2\pi}\|u_{0}\|_{2}^{2}}e^{i(t-t_{0})\Delta}u(t_{0})\|_{L_{t}^{\infty}H^{s}[t_{0}, t_{0}+1]}\leq \epsilon^{\frac{1}{2}-\delta_{1}}
\end{equation}

Now we note that 
\begin{equation}\label{eq: workdeco}
\begin{aligned}
e^{2it\epsilon^{2}\frac{1}{2\pi}\|u_{0}\|_{L^{2}}^{2}}(u-upp)(t)=&e^{2it\epsilon^{2}\frac{1}{2\pi}\|u_{0}\|_{L^{2}}^{2}}u(t)-e^{2it_{0}\epsilon^{2}\frac{1}{2\pi}\|u_{0}\|_{L^{2}}^{2}}u(t_{0})\\
+&e^{2it_{0}\epsilon^{2}\frac{1}{2\pi}\|u_{0}\|_{L^{2}}^{2}}e^{i(t-t_{0})\Delta}u(t_{0})-e^{i(t-t_{0})\Delta}u_{app}(t_{0})\\
+&e^{i(t-t_{0})\Delta}u_{app}(t_{0})-e^{2it\epsilon^{2}\|u_{0}\|_{2}^{2}}u_{app(t)}
\end{aligned}
\end{equation}
Note that phase $e^{2it\epsilon^{2}\frac{1}{2\pi}\|u_{0}\|_{L^{2}}^{2}}$ does not change $H^{s}$.
The first term in RHS of \eqref{eq: workdeco} is controlled via \eqref{eq: hswork}. The second term is controlled by \eqref{eq: close2}.

The third term can be controlled via the explicit formula of $u_{app}$,
\begin{equation}
\begin{aligned}	
\|e^{i(t-t_{0})\Delta}u_{app}(t_{0})-e^{2it\epsilon^{2}\|u_{0}\|_{2}^{2}}u_{app(t)}\|_{H^{s}}^{2}
\lesssim \sum_{n} |c_{n}g_{n}|^{2}|t-t_{0}|^{2}(\epsilon^{2}c_{n}^{2}|g_{n}|^{2})^{2}n^{2s}\\
\lesssim \sum_{n} \<n\>^{-3-6\theta}\<n\>^{2s}|g_{n}|^{6}\epsilon^{4}
\end{aligned}
\end{equation}

The desired estimates follows now from Lemma \ref{lem: hyper}, (or by computing the probability $|g_{n}|\leq <n>^{0+}\epsilon^{-(\frac{1}{2}+)}$)
\end{proof}

We now apply Lemma \ref{lem: localrandom2} and (assuming) Lemma \ref{lem: bootstrap} to close the proof of Lemma \ref{lem: errorcontrol}.

\begin{proof}
It follows standard continuity argument. Note that we are doing an upper bound estimate for$\mathbb{P}(\|u-u_{app}\|_{L_{t}^{\infty}H^{s}([0,T_{\epsilon}]\times \mathbb{T})}\geq \epsilon^{-\frac{1}{2}+\delta_{1}})
\mathbb{P}(\|u-u_{app}\|_{L_{t}^{\infty}H^{s}([0,T_{\epsilon}]\times \mathbb{T})}\geq \epsilon^{-\frac{1}{2}+\delta_{1}})$, and $F_{\epsilon}^{c}$ and $E_{\epsilon}$ in Lemma \ref{lem: bootstrap} admits small enough probability for the estimate in Lemma \ref{lem: errorcontrol}. Everytime, we apply Lemma \ref{lem: localrandom2} to extend the solution for extra time interval of Length 1, which will force us to drop $e^{-\epsilon^{-1-\delta_{2}}}$. We need to drop at most $\sim 1/\epsilon$ times, and such a loss of probability is allowed. We leave the left to interested readers. 

\end{proof}
 
We now turn to the proof  of Lemma \ref{lem: bootstrap}.

Lemma \ref{lem: bootstrap} can be reduced to the following Lemma via a Gronwall argument. 

\begin{lem}\label{lem: normalform}
Under the assumption of Lemma \ref{lem: bootstrap}. Assuming bootstrap hypothesis \eqref{eq: bh} for $[0,T]$. There exists a set $E_{\epsilon}$ with probability measure $e^{-\epsilon ^{-1-\delta_{3}}}$ for some $\delta_{3}>0$, such that for all $\omega\in F_{\epsilon}-E_{\epsilon}$: 
one has for some $C,C_3>0$
\begin{equation}\label{eq: normalformestimate}
\|(u-u_{app})(t)\|_{H^{s}}\leq C\epsilon^{2}\|u_{0}\|_{L_{x}^{2}}^{2}\int_{0}^{t}\|u(\tau)-u_{app}(\tau)\|_{H^{s}}d\tau+C\epsilon^{1/2-C_3\delta_{3}}
\end{equation}
	for all $t\leq T$.
\end{lem}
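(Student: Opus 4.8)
Here is how I would attack Lemma~\ref{lem: normalform}.

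\medskip

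\emph{Set-up and the Gronwall term.} The plan is a Duhamel--Gronwall argument on the difference after conjugating away the mean-field phase. Put $\tilde u:=e^{2it\epsilon^2\frac1{2\pi}\|u_0\|_2^2}u$ and $\tilde u_{app}:=e^{2it\epsilon^2\frac1{2\pi}\|u_0\|_2^2}u_{app}$; since $\NNN_2(e^{i\phi}f)=e^{i\phi}\NNN_2(f)$ and $\|u(t)\|_{L^2}=\|u_0\|_{L^2}$, equation \eqref{eq: uappformula} shows that $\tilde w:=\tilde u-\tilde u_{app}$, which has the same $H^s$-norm as $u-u_{app}$ and vanishes at $t=0$, solves $i\partial_t\tilde w+\Delta\tilde w=\epsilon^2\NNN_1(\tilde u)-\epsilon^2(\NNN_2(\tilde u)-\NNN_2(\tilde u_{app}))$. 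By Duhamel, $\|w(t)\|_{H^s}$ is at most $\epsilon^2\|\int_0^te^{i(t-\tau)\Delta}\NNN_1(\tilde u(\tau))\,d\tau\|_{H^s}$ plus $\epsilon^2\int_0^t\|\NNN_2(\tilde u(\tau))-\NNN_2(\tilde u_{app}(\tau))\|_{H^s}\,d\tau$. In the second term $\NNN_2$ is diagonal in frequency, so $\|\NNN_2(f_1,f_2,f_3)\|_{H^s}\le\|\widehat{f_2}\|_{\ell^\infty}\|\widehat{f_3}\|_{\ell^\infty}\|f_1\|_{H^s}$; expanding the difference as three trilinear terms with one slot equal to $\tilde w$ and the other two equal to $\tilde u$ or $\tilde u_{app}$ (whose Fourier coefficients are $\lesssim\|u_0\|_{L^2}$ in $\ell^\infty$, by mass conservation resp.\ the explicit form of $u_{app}$) bounds the second term by $C\epsilon^2\|u_0\|_{L^2}^2\int_0^t\|w(\tau)\|_{H^s}\,d\tau$ --- this is the Gronwall term of \eqref{eq: normalformestimate}, and the only term that is genuinely an integral over $[0,t]$.

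\medskip

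\emph{The normal form for $\NNN_1$.} With $a_n(\tau):=e^{i\tau n^2}\widehat{\tilde u}(n,\tau)$, the $n$-th Fourier coefficient of $\int_0^te^{i(t-\tau)\Delta}\NNN_1(\tilde u)\,d\tau$ equals $e^{-itn^2}\sum_{n_1-n_2+n_3=n,\,n_2\ne n_1,n_3}\int_0^te^{-i\tau\Phi_0}a_{n_1}\overline{a_{n_2}}a_{n_3}\,d\tau$, with modulation $\Phi_0=n_1^2-n_2^2+n_3^2-n^2=-2(n_1-n_2)(n_3-n_2)$, so $|\Phi_0|\ge2$ on the support of $\NNN_1$. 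Integrating by parts in $\tau$ --- and using that, by the $\tilde u$-equation, $\partial_\tau a_n$ is $\epsilon^2$ times (the Fourier transform of) the nonlinearity --- converts $\epsilon^2\int_0^te^{i(t-\tau)\Delta}\NNN_1(\tilde u)\,d\tau$ into $\epsilon^2$ times a boundary term (cubic in $\tilde u$, carrying a factor $\Phi_0^{-1}$, evaluated at $\tau=0,t$) plus $\epsilon^4$ times a bulk term (quintic in $\tilde u$, with $\Phi_0^{-1}$, integrated in $\tau$). The whole point of the normal form is that the boundary term is evaluated pointwise in $t$, hence does not pick up the factor $t\sim\epsilon^{-1}$ that a direct estimate of $\int_0^t\NNN_1(\tilde w)$ would generate; and the bulk term carries an extra $\epsilon^2$, which beats the loss of its two extra factors even after summing over $\sim\epsilon^{-1}$ unit subintervals (on which one controls $\tilde u$ in $X^{0,b}$ and its Duhamel part in $X^{s,b}$ through Lemma~\ref{lem: localrandom} and the $L^4_{t,x}$/$X^{s,b}$ calculus). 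So the bulk term contributes only $O(\epsilon^{1/2-C\delta_1})$.

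\medskip

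\emph{The boundary term, chaos, and hypercontractivity.} Decompose $\tilde u=\tilde u_{app}+\tilde w$ in each of the three slots. A term carrying one factor $\widehat{\tilde w}(\cdot,t)$ is, after Cauchy--Schwarz against the $\Phi_0^{-1}$-weight and the $\langle\cdot\rangle^{-1/2-\theta}$-decay of the remaining ($\tilde u_{app}$) coefficients, bounded by $C\epsilon^2\|u_0\|_{L^2}^2\|w(t)\|_{H^s}\le C_1^2\epsilon^{1/2-\delta_1}$ by \eqref{eq: bh} (or absorbed on the left, since $\epsilon^2\|u_0\|_{L^2}^2\ll1$); terms with two or three $\widehat{\tilde w}(\cdot,t)$-factors are $\le C\epsilon^2\|w\|_{C_tH^s}^{k}\|u_0\|_{L^2}^{3-k}\lesssim\epsilon^{1/2-C\delta_1}$ using \eqref{eq: bh} and the fact that $H^s$ is an algebra for $s>\tfrac12$. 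The only genuinely new contribution is the all-$\tilde u_{app}$ boundary term: at $\tau=0$ it is $\sum_{n_1-n_2+n_3=n,\,n_2\ne n_1,n_3}\frac{c_{n_1}c_{n_2}c_{n_3}}{i\Phi_0}g_{n_1}\overline{g_{n_2}}g_{n_3}$, and at $\tau=t$ it has the same shape with each $g_n$ replaced by $g_ne^{\pm it\epsilon^2c_n^2|g_n|^2}$ --- again i.i.d.\ standard complex Gaussians, by \cite[Lemma 4.2]{garrido2023large} --- times the unimodular $e^{-it\Phi_0}$. So each Fourier coefficient is a Gaussian chaos of degree $3$ with \emph{deterministic} coefficients, and Lemma~\ref{lem: hyper} applies. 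Two inputs finish the estimate. First, a multilinear counting estimate giving, for some $s>\tfrac12$,
\begin{equation*}
\sum_n\langle n\rangle^{2s}\!\!\sum_{\substack{n_1-n_2+n_3=n\\ n_2\ne n_1,\,n_3}}\frac{\langle n_1\rangle^{-1-2\theta}\langle n_2\rangle^{-1-2\theta}\langle n_3\rangle^{-1-2\theta}}{\langle n_1-n_2\rangle^{2}\langle n_3-n_2\rangle^{2}}<\infty,
\end{equation*}
which bounds the $L^2(\Upsilon)$-size of the chaos uniformly in $t$; this is precisely the nonlinear smoothing, the two modulation weights being what push the smoothed output above regularity $\tfrac12$. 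Second, the freedom to bound the $H^s$-norm only at the sub-critical level $\epsilon^{-3/2-C_3\delta_3}$ (rather than $\epsilon^{-3/2}$): taking the deviation parameter $\lambda\sim\epsilon^{-3(1+\delta_3)/2}$ in Lemma~\ref{lem: hyper} makes $e^{-c\lambda^{2/3}}\le e^{-\epsilon^{-1-\delta_3}}$, so summing over $n$ --- and throwing the tame-Gaussian event of the type \eqref{eq: tamegaussian} (and, if needed, $\{\|u_0\|_{L^2}>C_1\epsilon^{-1/2}\}$) into $E_\epsilon$ --- yields an exceptional set of probability $e^{-\epsilon^{-1-\delta_3}}$ off which the all-$\tilde u_{app}$ boundary term is $\lesssim\epsilon^{-3/2-C_3\delta_3}$ in $H^s$, hence $\lesssim\epsilon^{1/2-C_3\delta_3}$ after multiplying by $\epsilon^2$. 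Collecting the pieces gives \eqref{eq: normalformestimate}.

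\medskip

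\emph{Main obstacle.} I expect the heart of the matter to be the multilinear counting estimate in the correct range of $s$ --- genuine smoothing past $H^{1/2}$, with constants uniform in $\epsilon$ and in the endpoint $t$ --- which needs the standard case analysis on the size of $(n_1-n_2)(n_3-n_2)$ versus the largest of $|n_1|,|n_2|,|n_3|$ (it is here that $\theta>0$ is used), carried out by the same $X^{s,b}$-type computations as in \cite{bourgain1996invariant,compaan2021pointwise}. The second delicate point is keeping the exceptional probability at $e^{-\epsilon^{-1-\delta_3}}$ throughout: this succeeds only because the main oscillatory integral is kept over all of $[0,t]$ (so no factor $\epsilon^{-1}$ from the number of subintervals ever enters the chaos estimate) and because one only needs the sub-critical bound $\epsilon^{1/2-\delta}$ rather than $\epsilon^{1/2}$.
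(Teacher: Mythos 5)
Your skeleton (conjugating away the mean--field phase, extracting the Gronwall term from the resonant/diagonal cubic piece, and removing the non--resonant piece by integration by parts in time against $\Omega=-2(n_1-n_2)(n_3-n_2)$) coincides with the paper's. The genuine gap is in your treatment of the \emph{bulk} (quintic) term produced by the normal form. You dispose of it by asserting that on $\sim\epsilon^{-1}$ unit subintervals one controls $\tilde u$ in $X^{0,b}$ and its Duhamel part in $X^{s,b}$ via Lemma \ref{lem: localrandom} and ``$L^4_{t,x}$/$X^{s,b}$ calculus''. This cannot work as stated: the output must be measured in $H^{s}$ with $s>\tfrac12$, while $u$ and $u_{app}$ are almost surely \emph{not} in $H^{s}$ (only in $H^{\sigma}$ for $\sigma<\theta$), so no deterministic multilinear $X^{s,b}$/Strichartz estimate applied to $\tilde u$ can produce the required smoothing; the probabilistic structure of the rough part must be used \emph{inside} the quintic sum. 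This is exactly the technical heart of the paper: one inserts the bootstrap decomposition $w_k=c_kg_ke^{i\tau\epsilon^2c_k^2|g_k|^2}+h_k$, proves the trilinear estimate \eqref{eq: crucialreduce} with the gain $\<n\>^{-\theta+\delta_5}$ (the purely Gaussian case via Lemma \ref{lem: hyper} at fixed $\tau$, upgraded to $\sup_{\tau\in[0,T_\epsilon]}$ by a time grid of mesh $\tilde N_1^{-10}$ plus a crude $\partial_\tau$ bound), and then closes with the $|\Omega|$--weighted counting estimates \eqref{eq: key1}--\eqref{eq: key2}, which succeed at $s>\tfrac12$ \emph{only because} of that $\theta$--gain; without it the same counting yields only $\<k\>^{-1+}$, which forces $s<\tfrac12$ and the bootstrap does not close. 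None of these ingredients (the gain, the sup-in-$\tau$ mechanism, the weighted counting) appears in your proposal for the bulk term, and the power counting $\epsilon^4\cdot\epsilon^{-5/2}\cdot\epsilon^{-1}=\epsilon^{1/2}$ alone does not substitute for them.

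A secondary remark: you shift the chaos/hypercontractivity work to the \emph{boundary} term, whereas the paper bounds the boundary term deterministically on $F_\epsilon-G_\epsilon$ using \eqref{eq: bh}, the tamed-coefficient bound \eqref{eq: gepsic}/\eqref{eq: wktaupoint}, and again \eqref{eq: key1}--\eqref{eq: key2}. Your route for the all-$u_{app}$ boundary term is plausible (the $L^2(\Upsilon)$ computation with the squared weight $\langle n_1-n_2\rangle^{-2}\langle n_3-n_2\rangle^{-2}$ does give $s>\tfrac12$), but hypercontractivity only gives a bound at each fixed endpoint $t$, and you never supply the device needed to get uniformity over all $t\in[0,T_\epsilon]$ (in the paper: discretization plus a rough time-derivative bound, the same trick as in the linear LDP); also your quick ``algebra'' disposal of the boundary terms containing one $u_{app}$ factor and two $\tilde w$ factors still needs the $1/|\Omega|$ weight and a counting argument, since an $H^s$ product estimate with one factor merely bounded in (weighted) $\ell^2$ at regularity $-\tfrac12-\theta+$ does not close for $s>\tfrac12$. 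These are fixable with the paper's tools, but the bulk-term gap above is the step that would actually fail as written.
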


We first prove Lemma \ref{lem: bootstrap} assuming Lemma \ref{lem: normalform}.
\begin{proof}	
It is standard gronwall argument.\\
    Let $A(t):=\|(u-u_{app})(t)\|_{H^{s}}$ and $B(t) := \int_{0}^{t} A(\tau)d\tau$.  By Lemma \ref{lem: normalform}, for all $t\leq T$ and $\omega\in F_{\epsilon}-E_{\epsilon}$, we have 
    \begin{equation}\label{eq: gronwall}
        A(t)\leq CC_{1}^2\epsilon\int_{0}^{t}A(\tau )d\tau+C\epsilon^{1/2-C_3\delta_{3}}.
    \end{equation}
    Thus, we have
    \begin{equation*}
        B(t)'\leq CC_{1}^2\epsilon B(t)+C\epsilon^{1/2-C_3\delta_{3}},
    \end{equation*}
    which implies
    \begin{equation*}
        \frac{d}{dt}(e^{-CC_{1}^2\epsilon t}B(t))
        \leq C\epsilon^{1/2-C_3\delta_{3}}e^{-CC_{1}^2\epsilon t}.
    \end{equation*}
    Note that $B(0)=0$. For all $t\leq T$ and $\omega\in F_{\epsilon}-E_{\epsilon}$, we have
    \begin{equation}\label{eq: gcontrol}
        B(t)
        \leq C_{1}^{-2}\epsilon^{-1/2-C_3\delta_{3}}(e^{CC_{1}^2\epsilon t}-1).
    \end{equation}
    Inserting \eqref{eq: gcontrol} into \eqref{eq: gronwall}, yields \eqref{eq: be} if we take $C_{2}=2C_3$ and $\delta_{1}=\delta_{3}$.
\end{proof}
\begin{rem}
We can obtain $T_{\epsilon}\leq c\epsilon^{-1} |\ln \epsilon|$ for some universal constant with $0<c\ll 1$ as in \cite{garrido2023large}.	
\end{rem}

We now turn to the proof of Lemma \ref{lem: normalform}, which is the most technical part of the article.

\begin{proof}
    Let $w(t,x) = e^{2it\epsilon^{2}\frac{1}{2\pi}\|u_{0}\|_{L_{x}^{2}}^{2}-it\Delta}u(t,x) = \sum \limits_k w_k(t) e^{ikx}$. From \eqref{eq: main}, we have
    \begin{equation}\label{eq: eqforw}
    \begin{aligned}
        \begin{cases}
            i \partial_t w_k = - \epsilon^2  |w_k|^2 w_k + \epsilon^2  \sum \limits_{R_k(k_1,k_2,k_3)} w_{k_1} \overline{w_{k_2}} w_{k_3} e^{-it\Omega} \\
            w_k(0) = \frac{ g_k}{\left\langle k\right\rangle ^{\frac{1}{2}+\theta }}
        \end{cases},
    \end{aligned}
    \end{equation}
    where $\Omega = k_1^2-k_2^2+k_3^2-k^2$, $R_k(k_1,k_2,k_3) = \{(k_1,k_2,k_3)\in \mathbb{Z}^3:k = k_1 - k_2 + k_3, \Omega \neq 0\}$, see also \cite[(4.5)]{garrido2023large}.
    Let $a(t,x) = e^{2it\epsilon^{2}\frac{1}{2\pi}\|u_{0}\|_{L_{x}^{2}}^{2}-it\Delta}u_{app}(t,x) = \sum \limits_k a_k(t) e^{ikx}$. 
    From \eqref{eq: uappformula}, we have 
    \begin{align}\label{eq: eqfora}
        \begin{cases}
            $$i\partial_{t} a_{k} = - \epsilon^{2}|a_{k}|^{2}a_{k} $$ \\
            $$a_k(0) = \frac{ g_k}{\left\langle k\right\rangle ^{\frac{1}{2}+\theta }}$$
        \end{cases},
    \end{align}     
    see also \cite[(4.6)]{garrido2023large}.
    We subtract \eqref{eq: eqfora} from \eqref{eq: eqforw}, then integrate it from $0$ to $t$ and use the fact that $\|(u-u_{app})(t)\|_{H^{s}} = \|(w-a)(t)\|_{H^{s}}$, the square of the LHS of \eqref{eq: normalformestimate} is bounded by
    \begin{equation}\label{eq: fisrtreduction}
        \begin{aligned} 
            \epsilon ^{4} \sum_{k} \left|\int_{0}^{t}\left(|a_{k}|^{2}a_{k} - |w_{k}|^{2}w_{k}\right)(\tau ) d\tau \right|^2 \left\langle k\right\rangle^{2s} 
            + \epsilon ^{4} \sum_{k} \left| \sum_{R_k(k_1,k_2,k_3)} \int_{0}^{t}\left(w_{k_{1}}\overline{w_{k_{2}}}w_{k_{3}}\right)(\tau ) e^{-i\tau \Omega} d\tau \right|^2 \left\langle k\right\rangle^{2s}.
        \end{aligned}
    \end{equation}
    Next, Minkowski's inequality implies that the first term of \eqref{eq: fisrtreduction} is controlled by
    \begin{align*}
        \lesssim \epsilon ^4 \sum_{k\in\mathbb{Z}}\langle k\rangle^{2s}\left(\int_{0}^{t}\left(|a_{k}|^{2} + |w_{k}|^{2}\right)\left|a_{k} - w_{k}\right| d\tau \right)^2
        \lesssim \epsilon^{4}\|u_{0}\|_{L_{x}^{2}}^{4}\left(\int_{0}^{t}\|u(\tau)-u_{app}(\tau)\|_{H^{s}}d\tau\right)^2,
    \end{align*}
    which accounts for the first term in \eqref{eq: normalformestimate}. In order to get the other term, following \cite{garrido2023large}, we first integrate by parts
    \begin{align}\label{eq: integratebyparts}
        \int_{0}^{t} w_{k_{1}}\overline{w_{k_{2}}}w_{k_{3}} e^{-i\tau\Omega} d\tau = -\frac{1}{i\Omega} w_{k_{1}}\overline{w_{k_{2}}}w_{k_{3}} e^{-i\tau\Omega} \Bigg|_{\tau=0}^{\tau=t} 
        + \frac{1}{i\Omega} \int_{0}^{t} \partial_{\tau}\left(w_{k_{1}}\overline{w_{k_{2}}}w_{k_{3}}\right) e^{-i\tau\Omega} d\tau.
    \end{align}
    Therefore, we reduce \eqref{eq: normalformestimate} to the following two estimates: for all $\omega\in F_{\epsilon}-E_{\epsilon}$, one has
    \begin{align}\label{eq: derivativecontrol}
        \sum_{k\in\mathbb{Z}}\langle k\rangle^{2s} \left|\sum_{R_k(k_1,k_2,k_3)} \frac{1}{\Omega} \int_{0}^{t} \partial_{\tau}(w_{k_{1}}\overline{w_{k_{2}}}w_{k_{3}})(\tau ) e^{-i\tau\Omega} d\tau\right|^2
        \lesssim \epsilon ^{-3-2C_3\delta_1}
    \end{align}        
    and        
        \begin{align}\label{eq: boundary}
            \sum_{k\in\mathbb{Z}}\langle k\rangle^{2s} \left|\sum_{R_k(k_1,k_2,k_3)} \frac{1}{\Omega} \left[\left(w_{k_{1}}\overline{w_{k_{2}}}w_{k_{3}}\right)(t) e^{-it\Omega} - \left(w_{k_{1}}\overline{w_{k_{2}}}w_{k_{3}}\right)(0)\right]\right|^2
            \lesssim \epsilon ^{-3-2C_3\delta_1}.
        \end{align}
    \subsection{Proof of \eqref{eq: derivativecontrol}}
        We deal with \eqref{eq: derivativecontrol} first. Using H\"older's inequality, we can bound LHS of \eqref{eq: derivativecontrol} by 
        \begin{align*}
            t \int_{0}^{t} \sum \limits_{k\in\mathbb{Z}}\langle k\rangle^{2s} \left(\sum \limits_{R_k(k_1,k_2,k_3)} \frac{1}{|\Omega|} |\partial_{\tau}(w_{k_{1}}\overline{w_{k_{2}}}w_{k_{3}})(\tau )| \right)^2 d\tau.
        \end{align*}
        Since $t\lesssim T_\epsilon \lesssim \epsilon ^{-1}$, we can reduce \eqref{eq: derivativecontrol} to: for all $\omega\in F_{\epsilon}- E_{\epsilon}$ (for some $E_{\epsilon}$ with admissible small probability), one has
        \begin{align}\label{eq: dersiredppointk}
            \sup_{0\leq \tau\leq t} \sum \limits_{R_k(k_1,k_2,k_3)} \frac{1}{|\Omega|} |\partial_{\tau}(w_{k_{1}}\overline{w_{k_{2}}}w_{k_{3}})(\tau )|
            \lesssim \<k\>^{-s-1/2-\delta_3}\epsilon ^{-1/2-C_3\delta_1}
        \end{align}        
        for some $\delta_3>0$.   
        By \eqref{eq: eqforw} and the symmetry of $k_1\leftrightarrow k_3$, we can bound LHS of \eqref{eq: dersiredppointk} pointwise in $\tau$ by
        \begin{align*} 
            &\epsilon ^{2} \sum_{R_k(k_1,k_2,k_3)} \frac{1}{|\Omega|} |w_{k_{1}}(\tau ){w_{k_{2}}}(\tau )| \cdot|w_{k_{3}}(\tau )|^3 
            + \epsilon ^{2} \sum_{R_k(k_1,k_2,k_3)} \frac{1}{|\Omega|} |w_{k_{1}}(\tau ){w_{k_{3}}}(\tau )| \cdot|w_{k_{2}}(\tau )|^3 \\
            &+ \epsilon ^{2} \sum_{R_k(k_1,k_2,k_3)} \frac{|w_{k_{1}}(\tau ){w_{k_{2}}}(\tau )|}{|\Omega|} 
            \left|\sum_{R_{k_3}(k_1',k_2',k_3')} w_{k_1'} \overline{w_{k_2'}}  w_{k_3'} e^{-i\tau \Omega'}\right|\\
            &+ \epsilon ^{2} \sum_{R_k(k_1,k_2,k_3)} \frac{|w_{k_{1}}(\tau ){w_{k_{3}}}(\tau )|}{|\Omega|} 
            \left|\sum_{R_{k_2}(k_1',k_2',k_3')} w_{k_1'} \overline{w_{k_2'}}  w_{k_3'} e^{-i\tau \Omega''}\right|\\
            &=:\spadesuit _1 + \spadesuit _2 + \spadesuit _3 + \spadesuit _4,
        \end{align*}
        where $\Omega' = (k_1')^2 - (k_2')^2 +(k_3')^2 -k_3^2$ and $\Omega'' = (k_1')^2 - (k_2')^2 +(k_3')^2 -k_2^2$.
        Therefore, we reduce \eqref{eq: dersiredppointk} to: for all $\omega\in F_{\epsilon} -E_{\epsilon}$, one has
        \begin{equation}\label{eq: intergoal}
            \sup_{0\leq \tau\leq t} (\spadesuit_1 + \spadesuit_2 + \spadesuit_3 + \spadesuit_4)
            \lesssim \<k\>^{-s-1/2-\delta_3}\epsilon ^{-1/2-C_3\delta_1}.
        \end{equation}
        From the bootstrap hypothesis \eqref{eq: bh}, we can write 
        \begin{equation}\label{eq: bhdeco}
            w(t,x) = \sum_{n}c_{n}g_{n}e^{inx}e^{it\epsilon^{2}c_{n}^{2}|g_{n}|^{2}}  + h(t,x) = \sum_{n} w_n(t) e^{inx}
        \end{equation}
        with $\|h\|_{C_t[0,T]H^s} < \epsilon ^{-1/2 - \delta _1}$. Therefore, if we denote
        \begin{align}\label{eq: gepsic}
            G_\epsilon ^c:=\{\omega : |c_{k}g_k| \lesssim {\epsilon ^{-1/2 - \delta _1 }}{\left\langle k\right\rangle ^{-\frac{1}{2}-\theta + \delta_4}},k\in \mathbb{Z}\}
        \end{align}
        for some $0<\delta_4\ll 1$ to be chosen later,
        then we have $\mathbb{P} (G_\epsilon) \leq e^{-\epsilon ^{-1-\delta _1}}$. Further, for any $\omega \in F_\epsilon - G_\epsilon $, we have
        \begin{align}\label{eq: wktaupoint}
            \sup_{0\leq \tau\leq t} |w_{k}(\tau )| \lesssim {\epsilon ^{-1/2 - \delta _1 }}{\left\langle k\right\rangle ^{-s}},k\in \mathbb{Z}
        \end{align}        
        thanks to the bootstrap hypothesis \eqref{eq: bh}.
        Next, we claim that we can reduce \eqref{eq: intergoal} to: \\
        
        For every $n$, there exists a set $E_{\epsilon ,n}$ with probability measure $e^{-\<n\>^{\delta_5}\epsilon ^{-1-2\delta_1}}$ and \fbox{$0<\delta_5<\theta $} to be chosen later such that for all $\omega \in F_\epsilon - G_\epsilon - E_{\epsilon ,n}$, one has
        \begin{equation}\label{eq: crucialreduce}
            \sup_{0\leq \tau\leq t} \left|\sum_{R_{n}(n_1,n_2,n_3)} w_{n_1} \overline{w_{n_2}}  w_{n_3} e^{-i\tau (n_1^2-n_2^2+n_3^2-n^2)}\right| 
            \lesssim \<n\>^{-\theta + \delta_5} \epsilon ^{-3/2-3\delta_1}.
        \end{equation}
        Indeed, if \eqref{eq: crucialreduce} holds, then for all $\omega \in F_\epsilon -G_\epsilon - \bigcup _{n} E_{\epsilon ,n}$, using \eqref{eq: wktaupoint}, the LHS of \eqref{eq: intergoal} is bounded by
        \begin{equation}
            \begin{aligned}
                &\lesssim \sum_{R_k(k_1,k_2,k_3)} \frac{\epsilon ^{-1/2-5\delta _1}}{|\Omega|\<k_1\>^{s}  } \left(\frac{1}{\<k_2\>^{s}\<k_3\>^{3s}} + \frac{1}{\<k_2\>^{s}\<k_3\>^{\theta - \delta_5}} +\frac{1}{\<k_3\>^{s}\<k_2\>^{3s}} + \frac{1}{\<k_3\>^{s}\<k_2\>^{\theta - \delta_5}} \right) \\
                &\lesssim \sum_{R_k(k_1,k_2,k_3)} \frac{\epsilon ^{-1/2-5\delta_1} }{|\Omega|\<k_1\>^{s} \<k_2\>^{s} \<k_3\>^{\theta - \delta_5}} 
                + \frac{\epsilon ^{-1/2-5\delta_1} }{|\Omega|\<k_1\>^{s} \<k_3\>^{s} \<k_2\>^{\theta - \delta_5}}.
            \end{aligned}
        \end{equation}
        Now let $E_\epsilon  = \bigcup _{n} E_{\epsilon ,n} \bigcap  G_\epsilon $. We have 
        \begin{equation*}
            \mathbb{P}(E_\epsilon) \leq \mathbb{P}(G_\epsilon) + \sum_n \mathbb{P}(E_{\epsilon ,n}) \leq e^{-\epsilon ^{-1-\delta_1}} + \sum_n e^{-\<n\>^{\delta_5}\epsilon ^{-1-2\delta_1}} \lesssim e^{-\epsilon ^{-1-\delta_1}}.
        \end{equation*}
        So if \eqref{eq: crucialreduce} holds and we take \fbox{$C_3 \geq 5$}, then we reduce \eqref{eq: intergoal} to showing following two inequalities:
        \begin{equation}\label{eq: key1}
            \sum_{R_k(k_1,k_2,k_3)} \frac{1}{|\Omega|\<k_1\>^{s} \<k_2\>^{s} \<k_3\>^{\theta - \delta_5}} 
            \lesssim \<k\>^{-s-1/2-\delta_3}
        \end{equation}
        and
        \begin{equation}\label{eq: key2}
            \sum_{R_k(k_1,k_2,k_3)} \frac{1}{|\Omega|\<k_1\>^{s} \<k_3\>^{s} \<k_2\>^{\theta - \delta_5}} 
            \lesssim \<k\>^{-s-1/2-\delta_3}
        \end{equation}

        We rewrite LHS of \eqref{eq: key1} as 
        \begin{align}\label{eq: dyadickey1}
            \sum_{N_1,N_2,N_3} \sum_{S_k(k_1,k_2,k_3)} \frac{1}{|\Omega|\left\langle k_1\right\rangle ^s \left\langle k_2\right\rangle ^s \<k_3\>^{\theta - \delta_5} },
        \end{align}
        where $N_1,N_2,N_3$ are dyadic numbers and for fixed $k$, we denote
        \begin{equation*}
            S_k(k_1,k_2,k_3):=\{(k_1,k_2,k_3)\in \mathbb{Z}^3:k=k_1-k_2+k_3,k_2\neq k_1,k_3, |k_i|\sim N_i, i=1,2,3\}.
        \end{equation*}
        (Restrictly speaking, $S_k(k_1,k_2,k_3)$ defined above depends on $N_1,N_2,N_3$, we hide this dependence for simplicity.) We always assume $N_1\leq N_3$ due to the symmetry of $k_1\leftrightarrow k_3$.
        \begin{itemize}
            \item If $N_3\gg N_2$ or $N_3\sim N_2\gg N_1$, we have $N_3\gtrsim \<k\>$ and $|\Omega |\sim N_3 |k_2-k_1|$, thus,
            \begin{align*}
                \sum_{N_1,N_2,N_3} \sum_{S_k(k_1,k_2,k_3)} \frac{1}{|\Omega|\left\langle k_1\right\rangle ^s \left\langle k_2\right\rangle ^s \<k_3\>^{\theta - \delta_5} }
                &\lesssim \sum_{N_1,N_2,N_3} N_3^{ -\theta -1+\delta_5}(N_2N_1)^{-s} \sum_{S_k(k_1,k_2,k_3)} |k_2-k_1|^{-1}\\
                &\lesssim \sum_{N_1,N_2,N_3} N_3^{ -\theta -1+\delta_5}(N_2N_1)^{-s} (N_2\wedge N_1) \ln (N_1\vee N_2)\\
                &\lesssim \<k\>^{ -\theta -1+\delta_5}\\
                &\lesssim \<k\>^{ -s -1/2 - \delta_3},
            \end{align*}
            where the last inequality we take $\delta_3,\delta_5$ such that \fbox{$\delta_3 + \delta_5 <\frac{1}{2} + \theta - s$}.
            \item If $N_2\gg N_3$, we have $N_2\gtrsim \<k\>$ and $|\Omega| \sim N_2^2$ and
            \begin{align*}
                \sum_{N_1,N_2,N_3} \sum_{S_k(k_1,k_2,k_3)} \frac{1}{|\Omega|\left\langle k_1\right\rangle ^s \left\langle k_2\right\rangle ^s \<k_3\>^{\theta - \delta_5} }
                \lesssim \sum_{N_1,N_2,N_3} N_2^{-s-2}N_1^{1-s} N_3^{1+ \delta_5 -\theta }
                \lesssim \<k\>^{-2s} 
                \lesssim \<k\>^{ -s -1/2 - \delta_3},
            \end{align*}
            where the last inequality we choose $\delta_3$ such that \fbox{$\delta_3< s-\frac{1}{2}$}.
            \item If $N_3\sim N_2\sim N_1$, we have $N_3\gtrsim \<k\>$ and
            \begin{align*}
                \sum_{N_1,N_2,N_3} \sum_{S_k(k_1,k_2,k_3)} \frac{1}{|\Omega|\left\langle k_1\right\rangle ^s \left\langle k_2\right\rangle ^s \<k_3\>^{\theta - \delta_5} }
                                &\lesssim \sum_{N_1,N_2,N_3} N_3^{ -\theta -2s+\delta_5} \ln^2 N_2\\
                &\lesssim \<k\>^{ -\theta -2s+\delta_5 }\ln^4 \<k\>\\
                &\lesssim \<k\>^{ -s -1/2 - \delta_3},
            \end{align*}
            where the last inequality we take $\delta_3,\delta_5$ such that \fbox{$\delta_3 + \delta_5 <s-\frac{1}{2} + \theta $}.
        \end{itemize}
        Therefore, \eqref{eq: key1} holds. 

        For \eqref{eq: key2}, LHS of \eqref{eq: key2} is bounded by 
        \begin{align}
            \sum_{N_1,N_2,N_3} \sum_{S_k(k_1,k_2,k_3)} \frac{1}{|\Omega|\left\langle k_1\right\rangle ^s \left\langle k_3\right\rangle ^s} .
        \end{align}        
        We also assume $N_1\leq N_3$ due to the symmetry of $k_1\leftrightarrow k_3$.
        \begin{itemize}
            \item If $N_2\gg N_3$, we have $N_2\gtrsim \<k\>$ and $|\Omega| \sim N_2^2$ and 
            \begin{align*}
                \sum_{N_1,N_2,N_3} \sum_{S_k(k_1,k_2,k_3)} \frac{1}{|\Omega|\left\langle k_1\right\rangle ^s \left\langle k_3\right\rangle ^s}
                \lesssim \sum_{N_1,N_2,N_3} N_2^{ -2}(N_3N_1)^{1-s} 
                \lesssim \<k\>^{ -2s}
                \lesssim \<k\>^{ -s -1/2 - \delta_3},
            \end{align*}
            where the last inequality we choose \fbox{$\delta_3< s - \frac{1}{2}$}.
            \item If $N_3\gg N_2$, we have $N_3\gtrsim \<k\>$ and $|\Omega| \sim N_3 |k_3-k|$ and
            \begin{align*}
                \sum_{N_1,N_2,N_3} \sum_{S_k(k_1,k_2,k_3)} \frac{1}{|\Omega|\left\langle k_1\right\rangle ^s \left\langle k_3\right\rangle ^s}
                &\lesssim \sum_{N_1,N_2,N_3} N_3^{-s-1}N_1^{-s} \sum_{S_k(k_1,k_2,k_3)} |k_3-k|^{-1}\\
                &\lesssim \sum_{N_1,N_2,N_3} N_3^{-s-1}N_1^{-s} (N_1\wedge N_2)\ln N_3\\
                &\lesssim \<k\>^{-2s+} \\
                &\lesssim \<k\>^{ -s -1/2 - \delta_3},
            \end{align*}
            where the last inequality we choose \fbox{$\delta_3< s-\frac{1}{2}$}.
            \item If $N_3\sim N_2\gg N_1$, we have $N_3\gtrsim \<k\>$ and $|\Omega| \sim N_2 |k_2-k_3|$ and
            \begin{align*}
                \sum_{N_1,N_2,N_3} \sum_{S_k(k_1,k_2,k_3)} \frac{1}{|\Omega|\left\langle k_1\right\rangle ^s \left\langle k_3\right\rangle ^s}
                &\lesssim \sum_{N_1,N_2,N_3} N_3^{-s}N_2^{-1}N_1^{-s} \sum_{S_k(k_1,k_2,k_3)} |k_2-k_3|^{-1}\\
                &\lesssim \sum_{N_1,N_2,N_3} N_3^{-s-1}N_1^{1-s} \ln N_3\\
                &\lesssim \<k\>^{-2s+} \\
                &\lesssim \<k\>^{ -s -1/2 - \delta_3},
            \end{align*}
            where the last inequality we choose that \fbox{$\delta_3< s-\frac{1}{2}$}.
            \item If $N_3\sim N_2\sim N_1$, we have $N_3\gtrsim \<k\>$ and
            \begin{align*}
                \sum_{N_1,N_2,N_3} \sum_{S_k(k_1,k_2,k_3)} \frac{1}{|\Omega|\left\langle k_1\right\rangle ^s \left\langle k_3\right\rangle ^s}
                &\lesssim \sum_{N_1,N_2,N_3} N_3^{-s}N_1^{-s} \ln ^2N_3\\
                &\lesssim \<k\>^{-2s+} \\
                &\lesssim \<k\>^{ -s -1/2 - \delta_3},
            \end{align*}
            where the last inequality we choose that \fbox{$\delta_3< s-\frac{1}{2}$}.
        \end{itemize}
        Therefore, \eqref{eq: key2} holds. 

        Hence, we reduce \eqref{eq: intergoal} to \eqref{eq: crucialreduce}.
        
        Note that we have $w_k(\tau ) = c_k g_k e^{i\tau \epsilon ^2c_k^2|g_k|^2} + h_k=:I_k + II_k$ for all $k\in \mathbb{Z}$. We can divide LHS of \eqref{eq: crucialreduce} into $8$ terms and denote by $J_i = I$ if we choose $c_{k_i'} g_{k_i'} e^{i\tau \epsilon ^2c_{k_i'}^2|g_{k_i'}|^2}$ for the decomposition term of $w_{k_i'}$, and denote by $J_i = II$ otherwise, for $i=1,2,3$.

        \subsubsection{Case 1: {$J_1=J_2=J_3=I$}}
        
        We note that the subtle point here is that we need to get estimate in the form of $\sup_{\tau}$, but hypercontractivity estimates only works well for (finite) fixed $\tau$, so the trick we prove for linear LDP matters here.\\

        In this case, we reduce \eqref{eq: crucialreduce} to proving that there exists a set $E_{\epsilon ,n}$ with probability measure $e^{-\<n\>^{\delta_5}\epsilon ^{-1-2\delta_1}}$, for all $\omega \in F_\epsilon - G_\epsilon - E_{\epsilon ,n}$, one has
        \begin{equation}\label{eq: estimateIII}
            \begin{aligned}
                &\sup_{0\leq \tau\leq t} \sum_{N_1,N_2,N_3}\left|\sum_{S_{n}(n_1,n_2,n_3)} \frac{g_{n_1}e^{i\tau \epsilon ^2c_{n_1}^2|g_{n_1}|^2} \overline{g_{n_2}}e^{-i\tau \epsilon ^2c_{n_2}^2|g_{n_2}|^2} g_{n_3}e^{i\tau \epsilon ^2c_{n_3}^2|g_{n_3}|^2}}{\<n_1\>^{\frac{1}{2}+\theta }\<n_2\>^{\frac{1}{2}+\theta }\<n_3\>^{\frac{1}{2}+\theta }} e^{-i\tau (n_1^2-n_2^2+n_3^2-n^2)}\right| \\
                &\lesssim \<n\>^{-\theta + \delta_5} \epsilon ^{-3/2-3\delta_1}.
            \end{aligned}
        \end{equation}
        Denote
        \begin{equation*}
            F_{n,N_1,N_2,N_3,\epsilon }(\tau):=\sum_{S_{n}(n_1,n_2,n_3)} \frac{g_{n_1}e^{i\tau \epsilon ^2c_{n_1}^2|g_{n_1}|^2} \overline{g_{n_2}}e^{-i\tau \epsilon ^2c_{n_2}^2|g_{n_2}|^2} g_{n_3}e^{i\tau \epsilon ^2c_{n_3}^2|g_{n_3}|^2}}{\<n_1\>^{\frac{1}{2}+\theta }\<n_2\>^{\frac{1}{2}+\theta }\<n_3\>^{\frac{1}{2}+\theta }} e^{-i\tau (n_1^2-n_2^2+n_3^2-n^2)}.
        \end{equation*}
        Let $\tilde{N}_1,\tilde{N}_2,\tilde{N}_3$ be the decreasing order of $N_1,N_2,N_3$. Therefore, we have $\tilde{N}_1 \gtrsim \<n\>$. So for all $\omega \in F_\epsilon - G_\epsilon - E_{\epsilon ,n}$, we reduce \eqref{eq: estimateIII} to the following estimate
        \begin{equation}\label{eq: estimatelocIII}
            \sup_{0\leq \tau\leq t} \sum_{\tilde N_1,\tilde N_2,\tilde N_3}\left|F_{n,\tilde N_1,\tilde N_2,\tilde N_3,\epsilon }(\tau)\right| 
            \lesssim \<n\>^{-\theta + \delta_5} \epsilon ^{-3/2-3\delta_1}.
        \end{equation}
        Firstly, fix any $\tau$, noting that $g_{n}e^{i\tau \epsilon ^2c_{n}^2|g_{n}|^2-i\tau n^2}$ are still $i.i.d$ standard complex Guassian \cite{garrido2023large}, we have
        \begin{equation*}
            \mathbb{E}(\left|F_{n,\tilde N_1,\tilde N_2,\tilde N_3,\epsilon }(\tau)\right|^2) 
            = 2\sum_{S_{n}(n_1,n_2,n_3)} \frac{1}{\<n_1\>^{1+2\theta}\<n_2\>^{1+2\theta }\<n_3\>^{1+2\theta}}
            \lesssim \tilde{N}_1^{-1-2\theta} \tilde{N}_2^{-2\theta} \tilde{N}_3^{-2\theta},
        \end{equation*}
        Then applying the hypercontractivity of (multi) Guassians, up to $e^{-\epsilon ^{-1-2\delta_1} \tilde{N}_1^{2\delta_5}}$, we have 
        \begin{equation*}
            \left|F_{n,\tilde N_1,\tilde N_2,\tilde N_3,\epsilon }(\tau)\right|
            \lesssim \tilde{N}_1^{-1/2-\theta+3\delta_5} \tilde{N}_2^{-\theta} \tilde{N}_3^{-\theta} \epsilon ^{-3/2-3\delta_1}.
        \end{equation*}
        Now we decompose $[0,t] = \cup_{i=1}^{t\tilde{N}_1^{10}} I_i$ such that $I_j\cap I_k = \varnothing $ if $|j-k|>1$ and $I_j\cap I_{j+1} = t_{j}$ and the length of each $I_i$ equal to $\tilde{N}_1^{-10}$. Therefore, there exists a set $E_{\epsilon ,n,\tilde N_1,\tilde N_2,\tilde N_3}$ with probability measure $t\tilde{N}_1^{10} e^{-\epsilon ^{-1-2\delta_1} \tilde{N}_1^{2\delta_5}}$ such that for all $\omega \in (E_{\epsilon ,n,\tilde N_1,\tilde N_2,\tilde N_3})^c$, we have
        \begin{equation}\label{eq: pointwiselocIII}
            \left|F_{n,\tilde N_1,\tilde N_2,\tilde N_3,\epsilon }(t_j)\right|
            \lesssim \tilde{N}_1^{-1/2-\theta+3\delta_5} \tilde{N}_2^{-\theta} \tilde{N}_3^{-\theta} \epsilon ^{-3/2-3\delta_1},\quad j=1,\cdots,t\tilde{N}_1^{10}.
        \end{equation}        
        Since for all $\omega \in (G_\epsilon)^c $, we have 
        \begin{equation*}
            \epsilon ^2c_{k}^2|g_k|^2 \lesssim {\epsilon ^{1 - 2\delta_1 }} \ll 1,k\in \mathbb{Z}.
        \end{equation*}
        Therefore, for all $\omega \in F_\epsilon - G_\epsilon $, we have
        \begin{equation}\label{eq: partiallocIII}
            \begin{aligned}
                &\sup_{\tau >0}\left|\partial_\tau F_{n,\tilde N_1,\tilde N_2,\tilde N_3,\epsilon }(\tau)\right|\\
                &\leq \sum_{S_{n}(n_1,n_2,n_3)} \frac{|g_{n_1}g_{n_2}g_{n_3}|~|\epsilon ^2c_{n_1}^2|g_{n_1}|^2 -\epsilon ^2c_{n_2}^2|g_{n_2}|^2 +\epsilon ^2c_{n_3}^2|g_{n_3}|^2-(n_1^2-n_2^2+n_3^2-n^2) |}{\<n_1\>^{\frac{1}{2}+\theta }\<n_2\>^{\frac{1}{2}+\theta }\<n_3\>^{\frac{1}{2}+\theta }}\\
                & \lesssim \sum_{S_{n}(n_1,n_2,n_3)} |n_1^2-n_2^2+n_3^2-n^2| \|u_0\|_2^3\\
                &\lesssim \tilde{N}_1^{4} \epsilon ^{-3/2}.
            \end{aligned}
        \end{equation}
        So from \eqref{eq: pointwiselocIII} and \eqref{eq: partiallocIII}, for all $\omega \in F_\epsilon - G_\epsilon  -E_{\epsilon ,n,\tilde N_1,\tilde N_2,\tilde N_3}$, we have
        \begin{equation}\label{uniformlocIII}
            \sup_{j}\sup_{\tau\in I_j}\left|F_{n,\tilde N_1,\tilde N_2,\tilde N_3,\epsilon }(\tau)\right|
            \lesssim \sup_{j} \left|F_{n,\tilde N_1,\tilde N_2,\tilde N_3,\epsilon }(t_j)\right| + \tilde{N}_1^{4} \epsilon ^{-3/2} |I_j|
            \lesssim \tilde{N}_1^{-1/2-\theta+3\delta_5} \tilde{N}_2^{-\theta} \tilde{N}_3^{-\theta} \epsilon ^{-3/2-3\delta_1}.
        \end{equation}
        If we set $E_{\epsilon ,n}: = \bigcup_{\tilde N_1,\tilde N_2,\tilde N_3} E_{\epsilon ,n,\tilde N_1,\tilde N_2,\tilde N_3}$, then we have 
        \begin{equation*}
            \mathbb{P}(E_{\epsilon ,n}) 
            \leq \sum_{\tilde N_1,\tilde N_2,\tilde N_3} \mathbb{P}(E_{\epsilon ,n,\tilde N_1,\tilde N_2,\tilde N_3})
            \lesssim \sum_{\tilde N_1,\tilde N_2,\tilde N_3} t\tilde{N}_1^{10} e^{-\epsilon ^{-1-2\delta_1} \tilde{N}_1^{2\delta_5}} 
            \lesssim \epsilon ^{-1} e^{-\epsilon ^{-1-2\delta_1} \<n\>^{3\delta_5/2}}
            \lesssim e^{-\epsilon ^{-1-2\delta_1} \<n\>^{\delta_5}}.
        \end{equation*}
        Further, \eqref{uniformlocIII} implies for all $\omega \in F_\epsilon - G_\epsilon - E_{\epsilon ,n}$, one has
        \begin{equation}\label{uniformIII}
            \sup_{0\leq \tau\leq t} \sum_{\tilde N_1,\tilde N_2,\tilde N_3}\left|F_{n,\tilde N_1,\tilde N_2,\tilde N_3,\epsilon }(\tau)\right| 
            \lesssim \sum_{\tilde N_1,\tilde N_2,\tilde N_3} \tilde{N}_1^{-1/2-\theta+3\delta_5} \tilde{N}_2^{-\theta} \tilde{N}_3^{-\theta} \epsilon ^{-3/2-3\delta_1}
            \lesssim \<n\>^{-\theta + \delta_5} \epsilon ^{-3/2-3\delta_1}
        \end{equation}
        if \fbox{$\delta_5<\frac{1}{4}$}, where the last inequality we use the fact that $\tilde{N}_1\gtrsim \<n\>$. Therefore, \eqref{uniformIII} implies \eqref{eq: estimatelocIII}.

        \subsubsection{Case 2: {$J_1=J_2=I,J_3=II$}}
        
        In this case, we we reduce \eqref{eq: crucialreduce} to proving that for all $\omega \in F_\epsilon - G_\epsilon$, one has
        \begin{equation}\label{eq: I-I-II}
            \sup_{0\leq \tau\leq t} \sum_{R_{n}(n_1,n_2,n_3)} \frac{|g_{n_1}g_{n_2}h_{n_3}|}{\<n_1\>^{\frac{1}{2}+\theta }\<n_2\>^{\frac{1}{2}+\theta }}  
            \lesssim \<n\>^{-\theta + \delta_5} \epsilon ^{-3/2-3\delta_1}.
        \end{equation}
        Using Cauchy-Schwarz inequality and \eqref{eq: gepsic}, for all $\omega \in F_\epsilon - G_\epsilon$, we have
            \begin{equation}\label{eq: uniformI-I-II}
                \begin{aligned}
                    &\sup_{0\leq \tau\leq t} \sum_{R_{n}(n_1,n_2,n_3)} \frac{|g_{n_1}g_{n_2}h_{n_3}|}{\<n_1\>^{\frac{1}{2}+\theta }\<n_2\>^{\frac{1}{2}+\theta }}\\
                    &\lesssim \|h\|_{C_t[0,T]H^s} \left(\sum_{n_3} \<n_3\>^{-2s} \left|\sum_{R_{n_3,n}(n_1,n_2)} \frac{\epsilon ^{-1-2\delta_1 } }{\left\langle n_1 \right\rangle ^{1/2+ \theta -\delta_4 } \left\langle n_2 \right\rangle ^{1/2 + \theta -\delta_4}}\right|^2\right)^\frac{1}{2}\\
                    &\lesssim \epsilon ^{-3/2 -3\delta_1 } \left(\sum_{n_3} \<n_3\>^{-2s} \left|\sum_{R_{n_3,n}(n_1,n_2)} \frac{1}{\left\langle n_1 \right\rangle ^{1/2+ \theta -\delta_4 } \left\langle n_2 \right\rangle ^{1/2 + \theta -\delta_4}}\right|^2\right)^\frac{1}{2},
                \end{aligned}
            \end{equation}
            where for fixed $n,n_3$, we denote $R_{n_3,n}(n_1,n_2) := \{(n_1,n_2)\in\mathbb{Z}^2:n=n_1-n_2+n_3,n_2\neq n_1,n_3\}$.
            Next, we aim to reduce \eqref{eq: I-I-II} to following
            \begin{equation}\label{eq: mainreductionI-I-II}
                \sum_{R_{n_3,n}(n_1,n_2)} \frac{1}{\left\langle n_1 \right\rangle ^{1/2+ \theta -\delta_4 } \left\langle n_2 \right\rangle ^{1/2 + \theta -\delta_4}}
                \lesssim \left\langle n-n_3 \right\rangle ^{-2\theta +2\delta_5} .
            \end{equation}
            If \eqref{eq: mainreductionI-I-II} holds, noting that
            \begin{equation*}
                \sum_{|n_3|\ll |n|} \<n_3\>^{-2s} \left\langle n-n_3 \right\rangle ^{-4\theta +4\delta_5}
                \lesssim \<n\>^{-4\theta +4\delta_5} 
            \end{equation*}
            and
            \begin{equation*}
                \sum_{|n_3|\gg |n|} \<n_3\>^{-2s} \left\langle n-n_3 \right\rangle ^{-4\theta +4\delta_5}
                \sim \sum_{|n_3|\gg |n|} \<n_3\>^{-2s-4\theta +4\delta_5} 
                \lesssim \<n\>^{-4\theta +4\delta_5} 
            \end{equation*}
            and
            \begin{equation*}
                \sum_{|n_3|\sim |n|} \<n_3\>^{-2s} \left\langle n-n_3 \right\rangle ^{-4\theta +4\delta_5}
                \sim \<n\>^{-2s} \sum_{|n_3|\sim |n|} \<n_3-n\>^{-4\theta +4\delta_5} 
                \lesssim \<n\>^{1-2s-4\theta +4\delta_5},
            \end{equation*}
            then we have
            \begin{equation*}
                \sum_{n_3} \<n_3\>^{-2s} \left\langle n-n_3 \right\rangle ^{-4\theta +4\delta_5}
                \lesssim \<n\>^{-4\theta +4\delta_5},
            \end{equation*}
            which, along with \eqref{eq: mainreductionI-I-II} and \eqref{eq: uniformI-I-II}, yields \eqref{eq: I-I-II}. So we reduce \eqref{eq: I-I-II} to \eqref{eq: mainreductionI-I-II} if we take \fbox{$\delta_5< \theta $}. Similarly, 
            noting that
            \begin{equation*}
                \sum_{|n_1|\ll |n-n_3|} \<n_1\>^{\delta_4-\frac{1}{2}-\theta } \left\langle n-n_3-n_1 \right\rangle ^{\delta_4-\frac{1}{2}-\theta}
                \lesssim \left\langle n-n_3\right\rangle ^{\delta_4-\frac{1}{2}-\theta} \sum_{|n_1|\ll |n-n_3|} \<n_1\>^{\delta_4-\frac{1}{2}-\theta }
                \lesssim \left\langle n-n_3\right\rangle ^{2\delta_4-2\theta} 
            \end{equation*}
            and
            \begin{equation*}
                \sum_{|n_1|\gg |n-n_3|} \<n_1\>^{\delta_4-\frac{1}{2}-\theta } \left\langle n-n_3-n_1 \right\rangle ^{\delta_4-\frac{1}{2}-\theta}
                \sim \sum_{|n_1|\gg |n-n_3|} \<n_1\>^{2\delta_4-1-2\theta } 
                \lesssim \<n-n_3\>^{-2\theta +2\delta_4} 
            \end{equation*}
            and
            \begin{equation*}
                \sum_{|n_1|\sim |n-n_3|} \<n_1\>^{\delta_4-\frac{1}{2}-\theta } \left\langle n-n_3-n_1 \right\rangle ^{\delta_4-\frac{1}{2}-\theta}
                \sim \left\langle n-n_3\right\rangle ^{\delta_4-\frac{1}{2}-\theta} \sum_{|n_1|\sim |n-n_3|} \left\langle n-n_3-n_1 \right\rangle ^{\delta_4-\frac{1}{2}-\theta}
                \lesssim \<n-n_3\>^{-2\theta +2\delta_4} ,
            \end{equation*}
            then we have \eqref{eq: mainreductionI-I-II}
            if we take \fbox{$\delta_5\leq \delta_4$}.

        \subsubsection{Case 3: {$J_3=J_2=I,J_1=II$}}
        Similar to Case 2.
        
        \subsubsection{Case 4: {$J_1=J_3=I,J_2=II$}}
        
        Similar to Case 2.

        \subsubsection{Case 5: {$J_1=I,J_2=J_3=II$}}
        
        In this case, we we reduce \eqref{eq: crucialreduce} to proving that for all $\omega \in F_\epsilon - G_\epsilon$, one has
        \begin{equation}\label{I-II-II}
            \sup_{0\leq \tau\leq t} \sum_{R_{n}(n_1,n_2,n_3)} \frac{|g_{n_1}h_{n_2}h_{n_3}|}{\<n_1\>^{\frac{1}{2}+\theta }}  
            \lesssim \<n\>^{-\theta + \delta_5} \epsilon ^{-3/2-3\delta_1}.
        \end{equation}
        Using Cauchy-Schwarz inequality and \eqref{eq: gepsic}, for all $\omega \in F_\epsilon - G_\epsilon$, we have
            \begin{equation}\label{uniformI-II-II}
                \begin{aligned}
                    &\sup_{0\leq \tau\leq t} \sum_{R_{n}(n_1,n_2,n_3)} \frac{|g_{n_1}h_{n_2}h_{n_3}|}{\<n_1\>^{\frac{1}{2}+\theta }}\\
                    &\lesssim \sup_{0\leq \tau\leq t} \|h\|_{C_t[0,T]H^s} \left(\sum_{n_3} \<n_3\>^{-2s} \left|\sum_{R_{n_3,n}(n_1,n_2)} \frac{\epsilon ^{-1/2-\delta_1 } |h_{n_2}| }{\left\langle n_1 \right\rangle ^{1/2+ \theta -\delta_4 } }\right|^2\right)^\frac{1}{2}\\
                    &\lesssim \epsilon ^{-1/2 -\delta_1 } \|h\|_{C_t[0,T]H^s}^2 \left(\sum_{R_{n}(n_1,n_2,n_3)} \frac{1}{\left\langle n_1 \right\rangle ^{1+ 2\theta -2\delta_4 } \<n_2\>^{2s} \<n_3\>^{2s}}\right)^\frac{1}{2}\\
                    &\lesssim \epsilon ^{-3/2 -3\delta_1 } \left(\sum_{N_1,N_2,N_3} \sum_{S_n(n_1,n_2,n_3)} \frac{1}{\left\langle n_1 \right\rangle ^{2s} \<n_2\>^{2s} \<n_3\>^{2s}}\right)^\frac{1}{2}
                \end{aligned}
            \end{equation}
            if \fbox{$\delta_4<\frac{1}{2}+\theta -s$}.
            Since
            \begin{align*}
                \sum_{N_1,N_2,N_3} \sum_{S_n(n_1,n_2,n_3)} \frac{1}{\left\langle n_1\right\rangle ^{2s} \left\langle n_2\right\rangle ^{2s} \<n_3\>^{2s} }
                \lesssim \sum_{\tilde N_1,\tilde N_2,\tilde N_3} \tilde N_1^{-2s}(\tilde N_2\tilde N_3)^{1-2s} 
                \lesssim \<n\>^{ -2s},
            \end{align*}
            where $\tilde N_1,\tilde N_2,\tilde N_3$ are the decreasing order of $N_1,N_2,N_3$, the LHS of \eqref{uniformI-II-II} is bounded by $\epsilon ^{-3/2 -3\delta_1 } \<n\>^{ -s}$, which implies \eqref{I-II-II}.

        \subsubsection{Case 6: {$J_2=I,J_1=J_3=II$}}
        
        Similar to Case 5.
        
        \subsubsection{Case 7: {$J_3=I,J_1=J_2=II$}}
        
        Similar to Case 5.

        \subsubsection{Case 8: {$J_1=J_2=J_3=II$}}
        
        In this case, we we reduce \eqref{eq: crucialreduce} to proving that for all $\omega \in F_\epsilon - G_\epsilon$, one has
        \begin{equation}\label{eq: II-II-II}
            \sup_{0\leq \tau\leq t} \sum_{R_{n}(n_1,n_2,n_3)} |h_{n_1}h_{n_2}h_{n_3}|
            \lesssim \<n\>^{-\theta + \delta_5} \epsilon ^{-3/2-3\delta_1}.
        \end{equation}
        Using Cauchy-Schwarz inequality and \eqref{eq: gepsic} as in \eqref{uniformI-II-II}, we have
            \begin{equation}\label{eq: pointwiseII-II-II}
                \begin{aligned}
                    \sup_{0\leq \tau\leq t} \sum_{R_{n}(n_1,n_2,n_3)} |h_{n_1}h_{n_2}h_{n_3}|
                    \lesssim \epsilon ^{-3/2 -3\delta_1 } \left(\sum_{N_1,N_2,N_3} \sum_{S_n(n_1,n_2,n_3)} \frac{1}{\left\langle n_1 \right\rangle ^{2s} \<n_2\>^{2s} \<n_3\>^{2s}}\right)^\frac{1}{2}.
                \end{aligned}
            \end{equation}
        Therefore, similar to Case 5, \eqref{eq: pointwiseII-II-II} is enough to imply \eqref{eq: II-II-II}.

        Therefore, combining the above $8$ cases, we obtain \eqref{eq: crucialreduce} and thus we complete the proof of \eqref{eq: derivativecontrol}.

        Finally, we reduce Lemma \ref{lem: normalform} to \eqref{eq: boundary}.

        \subsection{Proof of \eqref{eq: boundary}}

        It suffices to prove \eqref{eq: boundary} for all $\omega\in F_{\epsilon}-G_{\epsilon}$, there exists some $\delta_6>0$ such that
        \begin{align}\label{eq: uniformbound}
            \sup_{0\leq \tau \leq t} \sum_{R_k(k_1,k_2,k_3)} \frac{|w_{k_{1}}\overline{w_{k_{2}}}w_{k_{3}}(\tau )|}{|\Omega|}
            \lesssim \epsilon ^{-3/2-C_3\delta_1} \<k\>^{-s-1/2-\delta_6}.
        \end{align}
        Using \eqref{eq: wktaupoint}, for all $\omega\in F_{\epsilon}-G_{\epsilon}$, LHS of \eqref{eq: uniformbound} is bounded by 
        \begin{equation}\label{eq: uniformboundreduction}
            \epsilon ^{-3/2-3\delta_1} \sum_{R_k(k_1,k_2,k_3)} \frac{1}{|\Omega|\<k_{1}\>^s \<k_{2}\>^s \<k_{3}\>^s}.
        \end{equation}
        Therefore, since we have taken \fbox{$C_3 \geq 5$}, we reduce \eqref{eq: uniformbound} to bound \eqref{eq: uniformboundreduction} by $\epsilon ^{-3/2-3\delta_1} \<k\>^{-s-1/2-\delta_6}$. This can be implied by \eqref{eq: key1} and \eqref{eq: key2} if we take $\delta_6 = \delta_4$.

        Hence, we complete the proof of Lemma \ref{lem: normalform}.

\end{proof}

\bibliographystyle{plain}
\bibliography{BG}
\end{document}